\newtheorem{theorem}{Theorem}[section]
\newtheorem{corollary}[theorem]{Corollary}
\newtheorem{lemma}[theorem]{Lemma}
\newtheorem{assumption}[theorem]{Assumption}
\newcommand{\bracket}[1]{\ensuremath{\left[ #1 \right]}}
\newcommand{\braces}[1]{\ensuremath{\left\{ #1 \right\}}}
\newcommand{\parenth}[1]{\ensuremath{\left( #1 \right)}}
\newcommand{\SO}{\operatorname{SO}(3)}
\newcommand{\MI}{{\mathbb I}}
\newcommand{\refeqn}[1]{(\ref{eqn:#1})}
\renewcommand{\Re}{\mathbb R}
\newcommand{\tr}[1]{\mathrm{tr}\ensuremath{\!\bracket{#1}}}
\newcommand{\EditTL}[1]{{\color{blue}\protect #1}}
\renewcommand{\EditTL}[1]{{\protect #1}}
\title{Semi-Global Attitude Controls Bypassing the Topological Obstruction on  $\SO$}
\author{Taeyoung Lee, Dong Eui Chang and Yongsoon Eun

\thanks{T. Lee is with the Department of Mechanical and Aerospace Engineering,  George Washington University, Washington DC, USA.
        {\tt\small  tylee@gwu.edu}}
\thanks{D.E. Chang is with the School of Electrical Engineering, KAIST, Daejeon, Korea.
        {\tt\small dechang@kaist.ac.kr}}
 \thanks{Y. Eun is with the Department of Information \& Communication Engineering, DGIST, Daegu, Korea.
        {\tt\small yeun@dgist.ac.kr}}
}
\begin{document}

\maketitle

\begin{abstract}
This paper presents global tracking strategies for the attitude dynamics of a rigid body. It is well known that global attractivity is prohibited for continuous attitude control systems on the special orthogonal group. Such topological restriction has been dealt with either by constructing smooth attitude control systems that exclude a set of  measure zero in the region of attraction, or by introducing hybrid control systems to obtain global asymptotic stability. This paper proposes alternative attitude control systems that are continuous in time to achieve exponential stability, where the region of attraction covers the entire special orthogonal group. The main contribution of this paper is providing a new framework to overcome the topological restriction in attitude controls without relying on discontinuities through the controlled maneuvers. The efficacy of the proposed methods is illustrated by numerical simulations and experiments.

\end{abstract}

\section{Introduction}

The   attitude dynamics  and control of a rigid body encounters  the unique challenge that the configuration space of  attitudes cannot be globally identified with a Euclidean space. They  evolve on the compact nonlinear manifold, referred to as the three-dimensional special orthogonal group or $\SO$, that is composed of $3\times 3$ orthogonal matrices with the determinant of one.

Traditionally, the special orthogonal group has been parameterized via local coordinates, such as Euler angles or Rodriguez parameters. It is well known that such minimal, three-parameter representations suffer from singularities~\cite{StuSR64}.

Quaternions have been regarded as an ideal alternative to minimal attitude representations, as they are defined by four parameters and they do not exhibit singularities. However, the configuration space of quaternions, namely the three-sphere double covers the special orthogonal group, and consequently, there are two antipodal quaternions corresponding to the same attitude. In fact, it has been shown that 5 parameters are required at least to represent the special orthogonal group globally in a one-to-one manner~\cite{StuSR64}.  This ambiguity inherent to quaternions should be carefully resolved. Otherwise, there could occur unwinding phenomena, where the rigid body rotates unnecessarily through a large angle even if the initial attitude error is small~\cite{BhaBerSCL00}. This has been handled by constructing a control input such that the two antipodal quaternions yield the same input~\cite{WenKreITAC91}, which is equivalent to designing an attitude controller in terms of rotation matrices. Another approach is to define an exogenous mechanism to lift attitude measurements on the special orthogonal group into the three-sphere in a robust fashion~\cite{MaySanITAC11}, which may cause additional complexities.

Alternatively, attitude control systems have been developed directly on the special orthogonal group to avoid the singularities of minimal parameterizations and the ambiguity of quaternions concurrently~\cite{BulLew05}. More specifically, a configuration error function that measures the discrepancy between the desired attitude and the current attitude is formulated via a matrix norm, and control systems  are designed such that controlled trajectories are attracted to the minimum of the error function, thereby accomplishing asymptotic stability.

However, regardless of the choice of attitude representations, attitude control systems are constrained by the topological restriction on the special orthogonal group that prohibits achieving global attractivity via any continuous feedback control~\cite{WilJDE67,BhaBerSCL00,BerPerA13}. This is because the domain of attraction for those systems is homeomorphic to a Euclidean space, which cannot be identified with the tangent bundle of the special orthogonal group globally. For example, in the design of attitude control systems based on the aforementioned configuration error function, there are at least four critical points of the error function, according to the Lusternik-Schnirelmann category~\cite{TakIM68}. This implies that there will be at least three undesired equilibria in the controlled dynamics, and the region of attraction to the desired attitude excludes the union of the stable manifolds of those undesired equilibria. One can show that the resulting reduced region of attraction almost covers the special orthogonal group~\cite{WenKreITAC91,Kod88,BulMurA99}, while precluding only a set of zero measure. But, the existence of the stable manifolds of the undesired equilibria may affect the controlled dynamics strongly~\cite{LeeLeoPICDC11}.

Recently, the topological restriction has been tackled by introducing discontinuities in the controlled attitude dynamics. In particular, a set of attitude configuration error functions, referred to as synergistic potential functions, has been proposed~\cite{MayTeeITAC13}. The family of synergistic potential functions is constructed such that at each undesired critical point of a potential function, there is another member in the family with a lower value of the error. By consistently switching to the controller derived from the minimal potential function, robust global asymptotic stability is achieved for the attitude dynamics. While this approach avoids chattering behaviors by introducing hysteresis, discontinuities in the control input may excite unmodeled dynamics and cause undesired behaviors in practice. Interestingly, it has been unclear if a more general class of feedback control systems could accomplish the global stabilization task without introducing such disruptions in the control input~\cite{MayTeeA13}.

The objective of this paper is to present an alternative framework to overcome the topological restriction on the special orthogonal group with control inputs that are continuous in time. This is achieved by shifting the desired attitude temporarily, instead of modifying the attitude error functions as in~\cite{MayTeeITAC13,LeeITAC15}. More explicitly, when the initial attitude and the initial angular velocity do not belong to the estimated region of attraction of a smooth attitude controller, the desired trajectory is shifted to a trajectory that is sufficiently close to the initial condition to guarantee convergence. While the initial value of the shifted reference trajectory is distinct from that of the true reference trajectory, it is constructed as a time-varying function such that the shifted reference trajectory exponentially converges to the true reference trajectory as  time tends to infinity. Consequently, the corresponding continuous-time controlled trajectory, which is designed to follow the shifted reference trajectory, will converge to the true reference trajectory. The resulting time-varying attitude control system is discontinuous with respect to the initial condition, thereby bypassing the topological restriction. But, it is continuous in time so as to avoid the aforementioned issues of switching in hybrid attitude controls.

All of these are rigorously examined and analyzed  so as to show exponential convergence to the true reference trajectory and to verify that the region of attraction covers the special orthogonal group completely. Furthermore, the shifted reference trajectory is formulated in  such an explicit manner, using a conjugacy class in the special orthogonal group, that no complicated inequality conditions are needed.  Later, this approach is also extended to adaptive controls to handle  unknown constant disturbances in the attitude dynamics. In short, the unique contribution of the proposed approach is that global attractivity is accomplished on the special orthogonal group with control inputs that are continuous in time, overcoming the topological restriction.

This paper is organized as follows. Mathematical preliminaries are presented and the attitude control problem is formulated in Section \ref{sec:PF}. With the assumption that there is no disturbance, two types of attitude control strategies are proposed in Section \ref{sec:ATC}. These are extended to adaptive controls in Section \ref{sec:AATC}, followed by numerical examples and experimental results.

\section{Problem Formulation}\label{sec:PF}

\subsection{Mathematical Preliminaries}

 The inner product $\langle A, B\rangle $ of two matrices or vectors $A$ and $B$ of the same size denotes the usual Euclidean inner product, i.e., $\langle A, B\rangle = \operatorname{tr}(A^TB)$. The norm  $\| A\|$ for a matrix or vector $A$ denotes the Euclidean norm, i.e., $\|A\|^2 = \langle A, A\rangle = \operatorname{tr}(A^TA)$. The minimum eigenvalue of a symmetric matrix $A$ is denoted by $\lambda_{\rm min} (A)$ and the maximum eigenvalue by $\lambda_{\rm max} (A)$.

The attitude dynamics of a rigid body evolve on the three-dimensional special orthogonal group, $\SO=\{R\in\Re^{3\times 3}\,|\, R^T R=I_{3\times 3},\, \mathrm{det}[R]=1\}$. For any $R, R_1, R_2 \in \SO$,
\begin{equation}\label{R:isometry}
\|RR_1 - RR_2\| = \| R_1  - R_2 \| = \|R_1R - R_2R\|.
\end{equation}
 Let  $\mathfrak{so}(3)$  denote the set of all $3\times 3$ skew symmetric matrices. The hat map $\hat {}{}: \mathbb R^3 \rightarrow \mathfrak{so}(3)$ is defined by
\[
v = (v_1, v_2,v_3) \mapsto \hat v = \begin{bmatrix}
0 & -v_3 & v_2 \\
v_3 & 0 & -v_1 \\
-v_2 & v_1 & 0
\end{bmatrix},
\]
and its inverse map is denoted by $\vee$ and called the vee map.   For any $v$ and $w$ in $\mathbb R^3$, $\hat v w = v \times w$ and
\[
\langle \hat v, \hat w\rangle  = 2\langle v, w\rangle,
\]
where  the left side is the inner product  on $\mathbb R^{3\times 3}$ and the  right   on $\mathbb R^3$. For $\theta \in [0,2\pi]$ and a unit vector $v\in \mathbb R^3$,  the matrix exponential  $\exp(\theta \hat v)$ is computed as follows:
\[
\exp(\theta \hat v) =
I +\sin\theta\hat v + (1-\cos\theta)\hat v^2.
\]

Next, we recall conjugacy classes in $\SO$~\cite{Cur84}. Let $Z_\theta\in\SO$ be the rotation about the axis $e_3=(0,0,1)$ by an angle $\theta\in\Re$:
\[
Z_\theta = \exp(\theta\hat e_3) =
\begin{bmatrix}
\cos\theta & -\sin\theta & 0 \\
\sin\theta & \cos\theta & 0\\
0&0&1
\end{bmatrix}.
\]
It is straightforward to show
\begin{equation}\label{Z:distance}
\|Z_{\theta_1} - Z_{\theta_2}\| =2 \sqrt{1-\cos(\theta_1 - \theta_2)}
\end{equation}
for any $\theta_1, \theta_2\in \mathbb R$.
For $\theta\in \mathbb R$, define the  conjugacy class of  $Z_\theta$ in $\SO$ as
\[
C_\theta = \{ R \in \SO \mid R = UZ_\theta U^T,\, U\in\SO\},
\]
which is the set of all rotations through angle $\theta$. The group $\SO$ is partitioned into conjugacy classes. More explicitly,
\begin{equation}\label{C:union}
\SO = \bigcup_{\theta \in [0,\pi]}C_\theta
\end{equation}
and
\[
C_{\theta_1} \bigcap C_{\theta_2} = \emptyset
\]
for any $0\leq \theta_1 < \theta_2 \leq \pi$.

Therefore, for any $X\in\SO$, there exist a unique angle  $\theta \in [0,\pi]$ and some $U\in\SO$ such that
\begin{equation}\label{XUZU}
X = U Z_{\theta} U^T,
\end{equation}
where $\theta \in [0,\pi]$ is determined by
\[
\theta = \arccos \left ( \frac{\operatorname{tr}(X)-1}{2}\right ).
\]
The rotation matrix $U$ satisfying (\ref{XUZU}) is not unique, but one can be obtained as follows. Let $v$ be a unit eigenvector of $X$ corresponding to eigenvalue 1. This vector $v$ satisfies $\exp(\theta \hat v) = X$ or $X^T$. If $\exp(\theta \hat v) =  X$ then set $u_3 = v$. Otherwise, set $u_3 = -v$. Choose a unit vector $u_1$ perpendicular to $u_3$ and let $u_2 = u_3 \times u_1$. Then, the rotation matrix $U$  defined by
\[
U = \begin{bmatrix}
u_1 & u_2 & u_3
\end{bmatrix}
\]
satisfies (\ref{XUZU}).
Alternatively, for $0< \theta <\pi$ the vector $u_3$ can be  computed as follows:
\[
 u_3 =  \left ( \frac{X-X^T}{2\sin\theta}  \right )^\vee,
 \]
and the remaining columns $u_1$ and $u_2$ are constructed as discussed above.

It is easy to show that
\[
\max_{R_1, R_2 \in \SO} \|R_1 - R_2\| = 2\sqrt{2}
\]
and the maximum value $2\sqrt 2$ is attained if and only if
\[
R_1R_2^T \in C_\pi,
\]
 where $C_\pi$ is the conjugacy class of $Z_\pi$.

\subsection{Attitude Dynamics and Control Objective}

The equations of motion for the attitude dynamics of a rigid body are given by
\begin{align}
\dot R  & = R\hat \Omega,\label{eqn:R_dot}\\
\MI \dot \Omega &= (\MI \Omega) \times \Omega + \tau+\Delta,\label{eqn:Omega:dot}
\end{align}
with the rotation matrix  $R \in \SO$ representing the linear transformation of the representation of a vector from the body-fixed frame to the inertial frame,  and the angular velocity $\Omega \in \mathbb R^3$ of the rigid body resolved in the body-fixed frame. The moment of inertia matrix is denoted by $\MI \in\mathbb R^{3\times 3}$, which is symmetric and positive-definite, and the control torque resolved in the body-fixed frame is denoted by $\tau \in \mathbb R^3$.

The above equations include a constant but unknown disturbance torque $\Delta\in\Re^3$, which satisfies the following assumption.
\begin{assumption}\label{assump:delta}
The magnitude of the disturbance is bounded by a known constant $\delta>0$, i.e. $\|\Delta\|\leq \delta$.
\end{assumption}

Let $(R_d(t), \Omega_d(t)) \in \SO \times \mathbb R^3$ be a smooth reference trajectory such that
\begin{equation}
\dot R_d(t) = R _d(t) \hat \Omega_d (t),\label{eqn:Rd_dot}
\end{equation}
for all $t\geq 0$. We wish to design a control torque $\tau$ such that the reference trajectory becomes asymptotically stable.

\section{Attitude Tracking Controls}\label{sec:ATC}

Throughout this section, it is assumed that there is no disturbance in the dynamics, i.e., $\Delta=0$. A smooth attitude controller that yields almost global exponential stability is first presented, and it is extended for global attractivity.

\subsection{Almost Global Tracking Strategy}\label{sec:AGT}

The attitude tracking error $E_R\in\Re^{3\times 3}$ and the angular velocity tracking error $e_\Omega\in\Re^3$ are defined as
\[
E_R = R - R_d, \qquad e_\Omega = \Omega - \Omega_d.
\]
Notice that our definition of $e_\Omega$ is distinct from that in \cite{LeeLeoPICDC10}, where the desired angular velocity is multiplied by $R^TR_d$.  Define an auxiliary vector $e_R\in\Re^3$ as
\begin{equation}
e_R  = \frac{1}{2} (R^T_dR - R^TR_d)^\vee.\label{eqn:eR}
\end{equation}
The relations between $E_R$ and $e_R$ are summarized as follows.
\begin{lemma}\label{lemma:eR:ER}
1. For any $R$ and $R_d \in \SO$,
\[
\|e_R\|^2 = \frac{1}{2}\|R - R_d\|^2 \left ( 1 - \frac{1}{8}\|R -R_d\|^2\right ).
\]

2.  For any $R$ and  $R_d \in \SO$,
\[
\|e_R\|^2  \leq \frac{1}{2}\|R -R_d\|^2.
\]

3. For any number $a$ satisfying $0 < a < 1$ and  for any $R$ and  $R_d \in \SO$ satisfying $ \|R -R_d\| \leq 2\sqrt{2a}$,
\[
\frac{(1-a)}{2}\|R -R_d\|^2 \leq  \|e_R\|^2.
\]

\begin{proof}
Let $R^TR_d=\exp(\theta\hat v)$ for $\theta\in[0,\pi]$  and $v\in\Re^3$ with $\|v\|=1$. Using Rodrigues' formula, one can show 
\[
\|E_R\|^2=4(1-\cos\theta),\quad \|e_R\|= \sin\theta.
\]
Substituting these, it is straightforward to show the first identity, which implies the next two inequalities. 
\end{proof}
\end{lemma}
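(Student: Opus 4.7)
The plan is to reduce everything to a single scalar parameter (the rotation angle) via the axis--angle representation, and then reduce the three statements to elementary trigonometric identities and inequalities.

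First I would exploit the isometry property \refeqn{R:isometry} (actually $\|R-R_d\|=\|R^TR-R^TR_d\|=\|I-R^TR_d\|$) to eliminate $R$ and $R_d$ in favor of the single rotation $R^TR_d \in \SO$. By the conjugacy class decomposition established earlier in the excerpt, I can write $R^TR_d = \exp(\theta \hat v)$ for some $\theta \in [0,\pi]$ and unit vector $v$. Using the Rodrigues formula $\exp(\theta \hat v) = I + \sin\theta\,\hat v + (1-\cos\theta)\,\hat v^2$, and the fact that $\operatorname{tr}(\hat v)=0$ and $\operatorname{tr}(\hat v^2)=-2\|v\|^2=-2$, a direct trace computation gives $\|R-R_d\|^2 = \|I - R^TR_d\|^2 = 4(1-\cos\theta)$.

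Next I would compute $e_R$. Since $R_d^T R - R^T R_d = \exp(-\theta\hat v) - \exp(\theta \hat v) = -2\sin\theta\,\hat v$ by Rodrigues' formula (the symmetric $I$ and $\hat v^2$ terms cancel), the definition \refeqn{eR} yields $e_R = -\sin\theta\, v$, hence $\|e_R\| = |\sin\theta| = \sin\theta$ because $\theta\in[0,\pi]$. This gives $\|e_R\|^2 = \sin^2\theta = (1-\cos\theta)(1+\cos\theta)$. Writing $1+\cos\theta = 2 - (1-\cos\theta)$ and substituting $1-\cos\theta = \tfrac14\|R-R_d\|^2$ produces exactly
\[
\|e_R\|^2 = \tfrac12\|R-R_d\|^2\bigl(1 - \tfrac18\|R-R_d\|^2\bigr),
\]
which is identity~1.

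The remaining two inequalities follow immediately from identity~1. For part~2, note that $1-\tfrac18\|R-R_d\|^2 \leq 1$ since $\|R-R_d\|^2 \geq 0$. For part~3, the hypothesis $\|R-R_d\| \leq 2\sqrt{2a}$ gives $\|R-R_d\|^2 \leq 8a$, hence $\tfrac18\|R-R_d\|^2 \leq a$, so $1 - \tfrac18\|R-R_d\|^2 \geq 1-a$, and substituting into identity~1 yields the claimed lower bound.

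There isn't really a hard step here; the only thing to be careful about is confirming that the axis--angle parameterization exhausts all of $\SO$ (guaranteed by the conjugacy class decomposition \refeqn{C:union} already in the excerpt) and that $\theta\in[0,\pi]$ ensures $\sin\theta \geq 0$ so that $\|e_R\|=\sin\theta$ rather than $|\sin\theta|$. Everything else is a short trace computation plus algebra.
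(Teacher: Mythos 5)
Your proof is correct and follows essentially the same route as the paper: parameterize $R^TR_d=\exp(\theta\hat v)$ via the axis--angle (conjugacy class) decomposition, use Rodrigues' formula to get $\|E_R\|^2=4(1-\cos\theta)$ and $\|e_R\|=\sin\theta$, and deduce the identity and the two inequalities by elementary algebra. You simply spell out the trace computation and the derivation of parts 2 and 3 that the paper leaves as "straightforward."
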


\begin{lemma}
 Along the trajectory of the  rigid body  system,
\[
{\dot e}_R = C(R^TR_d) e_\Omega + e_R \times \Omega_d
\]
where
\begin{equation}\label{def:C:matrix}
C(R^TR_d) = \frac{1}{2}(\operatorname{tr} (R^TR_d) I - R^TR_d).
\end{equation}

\begin{proof}
From \refeqn{R_dot} and \refeqn{Rd_dot},
\[
\dot e_R =
\frac{1}{2}\{\hat\Omega R^T R_d + R_d^T R \hat\Omega
-\hat\Omega_d R_d^T R - R^T R_d\hat\Omega_d\}.
\]
Substitute $\Omega= e_\Omega +\Omega_d$, and then the two terms dependent on $e_\Omega$ reduce to $C(R^TR_d) e_\Omega$ by the identity
\[
(\hat x A + A^T \hat x)^\vee = (\operatorname{tr}(A)I - A)x
\]
for all $x\in \mathbb R^3$ and $A\in \mathbb R^{3\times 3}$.  The remaining terms, which depend on $\Omega_d$, simplify to $e_R \times \Omega_d$ by  the definition of $e_R$ in \refeqn{eR} and the identity, $\hat x\hat y - \hat y \hat x = \widehat{x\times y}$ for any $x,y\in\Re^3$.
\end{proof}
\end{lemma}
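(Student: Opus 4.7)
My plan is to differentiate the defining expression for $e_R$ directly, substitute the kinematic equations, and then split the resulting terms into an $e_\Omega$--part and an $\Omega_d$--part, simplifying each via a targeted algebraic identity.

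First, I would compute
\[
\dot e_R = \tfrac12\bigl(\dot R_d^T R + R_d^T \dot R - \dot R^T R_d - R^T \dot R_d\bigr)^\vee .
\]
Plugging in $\dot R = R\hat\Omega$ and $\dot R_d = R_d \hat\Omega_d$, and using $(R\hat\Omega)^T = -\hat\Omega R^T$ together with the corresponding identity for $R_d$, the cross terms rearrange to the displayed formula
\[
\dot e_R = \tfrac12\bigl\{\hat\Omega R^T R_d + R_d^T R \hat\Omega - \hat\Omega_d R_d^T R - R^T R_d \hat\Omega_d\bigr\}^\vee .
\]
This is routine and I would not grind through it in the actual writeup.

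Next, substituting $\Omega = e_\Omega + \Omega_d$ splits the right-hand side into two groups. The $e_\Omega$-dependent group is exactly
\[
\bigl(\hat e_\Omega\, (R^T R_d) + (R^T R_d)^T \hat e_\Omega\bigr)^\vee,
\]
which by the stated identity $(\hat x A + A^T\hat x)^\vee = (\operatorname{tr}(A)I - A)x$ (applied with $A = R^TR_d$, $x=e_\Omega$) collapses to $(\operatorname{tr}(R^TR_d)I - R^TR_d)\,e_\Omega$, giving $C(R^TR_d)e_\Omega$ after the factor of $1/2$.

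The main (and only mildly tricky) obstacle is the $\Omega_d$-group, since $R^TR_d$ is not skew-symmetric so the commutator identity cannot be applied to it directly. The trick is to regroup the four surviving terms as
\[
\hat\Omega_d\bigl(R^T R_d - R_d^T R\bigr) + \bigl(R_d^T R - R^T R_d\bigr)\hat\Omega_d ,
\]
and then invoke the definition $R_d^T R - R^T R_d = 2\hat e_R$ from \refeqn{eR}. This converts the expression into $2(\hat e_R\hat\Omega_d - \hat\Omega_d \hat e_R)$, at which point the commutator identity $\hat x\hat y - \hat y\hat x = \widehat{x\times y}$ applies cleanly, yielding $2\,\widehat{e_R\times\Omega_d}$. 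Taking $\vee$ and multiplying by $1/2$ produces exactly $e_R\times \Omega_d$, and combining with the first group completes the identity.
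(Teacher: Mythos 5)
Your proposal is correct and follows essentially the same route as the paper: differentiate $e_R$, substitute $\Omega = e_\Omega + \Omega_d$, use the identity $(\hat x A + A^T\hat x)^\vee = (\operatorname{tr}(A)I - A)x$ for the $e_\Omega$-terms, and use the definition $R_d^TR - R^TR_d = 2\hat e_R$ together with the commutator identity $\hat x\hat y - \hat y\hat x = \widehat{x\times y}$ for the $\Omega_d$-terms. You simply make explicit the regrouping that the paper leaves implicit.
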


Consider a Lyapunov  function (candidate) $V: \SO \times \mathbb R^3 \times \mathbb R \rightarrow \mathbb R$ defined by
\begin{align*}
V(R, \Omega,t) &= \frac{k_R}{4}\| E_R\|^2 + \frac{1}{2} \|e_\Omega\|^2  + \mu \langle e_R,   e_\Omega \rangle,\end{align*}
where $k_R >0$ and $\mu>0$. Define an auxiliary function $V_0(R, \Omega,t) $ as follows:
\begin{equation}\label{def:V0}
V_0 (R, \Omega,t) = \frac{k_R}{4}\| E_R\|^2 + \frac{1}{2}\| e_\Omega\|^2,
\end{equation}
which coincides with $V$ when $\mu = 0$.  The following lemma discusses positive-definiteness of the function $V$ and its relationship with its auxiliary $V_0$.

\begin{lemma}\label{lemma:V:positive}
Suppose
\[
0 < \mu < \sqrt {k_R}.
\]
Then, the symmetric matrices
\begin{equation}\label{def:W1}
W_1 = \begin{bmatrix}
\frac{1}{4}k_R & -\frac{1}{2\sqrt 2}\mu \\
-\frac{1}{2\sqrt 2}\mu & \frac{1}{2}
\end{bmatrix},\,\,
W_2 = \begin{bmatrix}
\frac{1}{4}k_R & \frac{1}{2\sqrt 2}\mu \\
\frac{1}{2\sqrt 2}\mu & \frac{1}{2}
\end{bmatrix}
\end{equation}
are positive-definite and satisfy
\begin{equation}\label{W1:less:than:V}
z^T W_1z \leq V(R,\Omega, t)\leq z^T W_2 z
\end{equation}
for all $(R,\Omega) \in \SO\times \mathbb R^3$ and $t\geq 0$, where $z = ( \| E_R  \|, \| e_\Omega \| ) \in \mathbb R^2$. Moreover,
\begin{equation}\label{V0:less:V}
\frac{\sqrt{k_R} - \mu}{\sqrt{k_R}}V_0(R,\Omega,t) \leq V(R,\Omega, t)\leq \frac{\sqrt{k_R} + \mu}{\sqrt{k_R}}V_0(R,\Omega,t)
\end{equation}
for all $(R,\Omega) \in \SO\times \mathbb R^3$ and $t\geq 0$.

\begin{proof}
The inequality (\ref{W1:less:than:V}) follows from  Lemma \ref{lemma:eR:ER} and the Cauchy-Schwarz inequality. Next, let $\tilde z = (\frac{\sqrt{k_R}}{2}\|E_R\|, \frac{1}{\sqrt{2}} \|e_\Omega\|)^T\in\Re^2$. Then, $z^T W_1 z$ can be rewritten as
\[
z^T W_1 z = (\tilde z)^T \begin{bmatrix} 1 & -\frac{\mu}{\sqrt{k_R}} \\
 -\frac{\mu}{\sqrt{k_R}} & 1 \end{bmatrix} \tilde z \geq \left(1-\frac{\mu}{\sqrt{k_R}}\right) \|\tilde z\|^2,
\]
which, together with (\ref{W1:less:than:V}), shows the first inequality in  (\ref{V0:less:V})  as $\|\tilde z\|^2=V_0(R,\Omega,t)$. The second inequality in  (\ref{V0:less:V}) can be shown similarly.
\end{proof}
\end{lemma}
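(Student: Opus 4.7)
The plan is to break the argument into three stages: verify positive‑definiteness of the sandwich matrices, establish the two‑sided quadratic bound on $V$, and then convert that bound into the comparison with $V_0$ via a diagonal rescaling.

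First, the positive‑definiteness of $W_1$ and $W_2$ is a direct check. Both matrices have the same positive diagonal entries $k_R/4$ and $1/2$, and the off‑diagonal entries differ only in sign, so $\det W_1 = \det W_2 = k_R/8 - \mu^2/8$, which is strictly positive precisely under the standing hypothesis $\mu < \sqrt{k_R}$. Sylvester's criterion then gives positive‑definiteness of both.

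Next, to prove the sandwich inequality (\ref{W1:less:than:V}), I would isolate the sign‑indefinite cross term $\mu\langle e_R,e_\Omega\rangle$ in $V$ and bound it via Cauchy--Schwarz together with Lemma \ref{lemma:eR:ER}(2). That gives $|\mu\langle e_R,e_\Omega\rangle|\leq \mu\,\|e_R\|\|e_\Omega\|\leq \tfrac{\mu}{\sqrt{2}}\|E_R\|\|e_\Omega\|$. Adding this inequality with the minus sign to the $\mu$‑free part $\tfrac{k_R}{4}\|E_R\|^2+\tfrac12\|e_\Omega\|^2$ reproduces exactly $z^TW_1z$, while subtracting it reproduces $z^TW_2z$, because a $2\times 2$ quadratic form $z^TWz$ with off‑diagonal entry $w$ contributes $2w\,\|E_R\|\|e_\Omega\|$, matching the choice $w=\pm\mu/(2\sqrt{2})$ in the definition (\ref{def:W1}).

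For the comparison with $V_0$, I would introduce the rescaled vector $\tilde z = (\tfrac{\sqrt{k_R}}{2}\|E_R\|,\tfrac{1}{\sqrt 2}\|e_\Omega\|)^T$, which satisfies $\|\tilde z\|^2 = V_0$ by the very definition (\ref{def:V0}). Substituting this rescaling into $z^TW_1z$ converts the form into $\tilde z^TM_-\tilde z$ with $M_- = \begin{bmatrix}1 & -\mu/\sqrt{k_R}\\ -\mu/\sqrt{k_R} & 1\end{bmatrix}$, whose eigenvalues are $1\pm\mu/\sqrt{k_R}$. Hence $z^TW_1z \geq (1-\mu/\sqrt{k_R})V_0$, and combined with (\ref{W1:less:than:V}) this yields the lower bound on $V$. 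The corresponding upper bound follows by the analogous computation with $W_2$, whose rescaled form has the opposite sign on the off‑diagonal and so attains its maximum eigenvalue $1+\mu/\sqrt{k_R}$.

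The only delicate point is keeping the constants consistent: the off‑diagonal entry $\mu/(2\sqrt{2})$ in $W_1,W_2$ is dictated by the factor of $1/\sqrt{2}$ appearing in the bound $\|e_R\|\leq \tfrac{1}{\sqrt 2}\|E_R\|$ from Lemma \ref{lemma:eR:ER}, so one has to be careful not to lose or double this factor when matching the quadratic form coefficients. Everything else is routine linear algebra, and the assumption $\mu<\sqrt{k_R}$ enters in a transparent way at two points: as the positivity of the determinant of $W_i$, and as the positivity of the smaller eigenvalue $1-\mu/\sqrt{k_R}$ of the rescaled matrix.
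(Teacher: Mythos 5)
Your proposal is correct and follows essentially the same route as the paper: Lemma \ref{lemma:eR:ER}(2) plus Cauchy--Schwarz for the sandwich bound (\ref{W1:less:than:V}), and the same rescaled vector $\tilde z=(\tfrac{\sqrt{k_R}}{2}\|E_R\|,\tfrac{1}{\sqrt 2}\|e_\Omega\|)^T$ with eigenvalues $1\pm\mu/\sqrt{k_R}$ for (\ref{V0:less:V}). The only difference is that you spell out the positive-definiteness check ($\det W_i=(k_R-\mu^2)/8>0$) and the coefficient matching explicitly, which the paper leaves implicit.
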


We propose the following tracking controller:
\begin{equation}\label{tracking:control}
\tau =  - (\MI \Omega)\times \Omega + \MI ( -k_R e_R - k_\Omega e_\Omega + \Omega \times \Omega_d + \dot \Omega_d),
\end{equation}
where $k_R>0$ is the same constant as that used for the function $V$,  and $k_\Omega >0$. The following lemma computes the rate of change of $V_0$ and $V$ along the trajectory of the closed-loop system.

\begin{lemma}\label{lemma:Vdot:less}
1. Along the trajectory of the closed-loop system with the control (\ref{tracking:control}), the time-derivative of the auxiliary $V_0$ is given by
\begin{equation}
{\dot V}_0  (R,\Omega,t) =-k_\Omega\|e_\Omega\|^2.\label{eqn:V0_dot}
\end{equation}
2. Let $a$ be any number satisfying $0 < a <1$ and choose any $\mu$ such that
\[
0 < \mu < \frac{4(1-a)k_R k_\Omega}{4(1-a) k_R + k_\Omega^2}.
\]
If
\begin{equation}
\| R -R_d\| \leq 2\sqrt{2a},  \label{eqn:ER2a}
\end{equation}
 then the time-derivative of the Lyapunov function along the controlled trajectories satisfies
\begin{equation}\label{V:dot:lessthan:W3}
{\dot V}  (R,\Omega,t) \leq -z^TW_3z
\end{equation}
with  $z = (\| E_R \|, \| e_\Omega \|) \in \mathbb R^2$ and the matrix $W_3\in\Re^{3\times 3}$ defined as
\begin{equation}\label{def:W3}
W_3 = \begin{bmatrix}
 \frac{(1-a)}{2}\mu k_R & - \frac{1}{2\sqrt 2}\mu k_\Omega \\
- \frac{1}{2\sqrt 2}\mu k_\Omega & k_\Omega -\mu
\end{bmatrix},
\end{equation}
which is positive-definite.

\begin{proof}
Let $Q=R_d^T R\in\SO$. From the attitude kinematics equations and the definition of $e_\Omega$,
\[
\dot Q = \dot R_d^T R + R_d^T\dot R = -\hat\Omega_d Q + Q\hat\Omega= Q\hat e_\Omega + Q\hat \Omega_d-\hat\Omega_d Q.
\]
We have 
\[
\|E_R\|^2 = \tr{(R-R_d)^T(R-R_d)} = 2\tr{I_{3\times 3}-Q}.
\]
 Therefore,
\begin{align*}
\frac{d}{dt} \parenth{\frac{k_R}{4}\|E_R\|^2} 
&= -\frac{k_R}{2}\tr{\dot Q}\\
&= -\frac{k_R}{2}\tr{Q\hat e_\Omega + Q\hat \Omega_d-\hat\Omega_d Q}\\
&=-\frac{k_R}{2}\tr{Q\hat e_\Omega},
\end{align*}
where the last equality is obtained using $\tr{AB-BA}=0$ for any square matrices $A,B$.  From the identity $\tr{A\hat x} = - x\cdot (A-A^T)^\vee$ for any $x\in\Re^3$ and $A\in\Re^{3\times 3}$, the above is rewritten as
\[
\frac{k_R}{2} e_\Omega\cdot(Q-Q^T)^\vee = k_R e_R\cdot e_\Omega.
\]
Using this, along the trajectory of \refeqn{R_dot} and \refeqn{Omega:dot},
\begin{align*}
{\dot V}  (R,\Omega,t) &= \langle e_\Omega, k_R e_R - {\dot \Omega}_d + \MI^{-1}( \tau + (\MI \Omega) \times \Omega ) \rangle   \\
&+ \mu \langle C(R^TR_d) e_\Omega, e_\Omega \rangle \\ &+ \mu \langle e_R, \Omega_d \times e_\Omega - {\dot \Omega}_d + \MI^{-1}( \tau+ (\MI \Omega) \times \Omega ) \rangle,
\end{align*}
where the matrix $C(R^TR_d)$ is defined in (\ref{def:C:matrix}). Substituting the control (\ref{tracking:control}), and using the fact that $e_\Omega \times \Omega_d = (\Omega - \Omega_d) \times \Omega_d = \Omega \times \Omega_d$, this reduces to
\begin{align}\label{compute:Vdot}
{\dot V}  (R,\Omega,t) &= - k_\Omega \|e_\Omega\|^2 -\mu k_R\|e_R\|^2 - \mu k_\Omega \langle e_R, e_\Omega \rangle \nonumber \\
&\quad  + \mu \langle C(R^TR_d)e_\Omega, e_\Omega\rangle.
\end{align}
Setting $\mu=0$ yield \refeqn{V0_dot}. According to \cite{LeeLeoPICDC10}, the matrix $C(R^TR_d)$ defined in (\ref{def:C:matrix}) satisfies $\| C(R^TR_d)\|_2 \leq 1$, where $\| \cdot \|_2$ is the operator 2-norm. Using this fact, the Cauchy-Schwarz inequality and Lemma \ref{lemma:eR:ER}, one can easily prove (\ref{V:dot:lessthan:W3}). The given bound of $\mu$ guarantees the positive-definiteness of $W_3$.
\end{proof}
\end{lemma}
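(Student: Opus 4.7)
The plan is to compute the Lyapunov rate $\dot V_0$ and $\dot V$ along the closed-loop trajectory directly, and then reduce the resulting expression to a quadratic form in $z=(\|E_R\|,\|e_\Omega\|)$ using the inequalities in Lemma \ref{lemma:eR:ER} together with the spectral bound on the matrix $C(R^TR_d)$.

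For part 1, I would differentiate the two pieces of $V_0$ separately. Setting $Q=R_d^TR$, the identity $\|E_R\|^2=2\,\operatorname{tr}(I-Q)$ combined with the kinematic identity $\dot Q=Q\hat e_\Omega+Q\hat\Omega_d-\hat\Omega_d Q$ and $\operatorname{tr}(AB-BA)=0$ reduces the derivative of $\frac{k_R}{4}\|E_R\|^2$ to $-\frac{k_R}{2}\operatorname{tr}(Q\hat e_\Omega)$, which by $\operatorname{tr}(A\hat x)=-x\cdot(A-A^T)^\vee$ becomes $k_R\langle e_R,e_\Omega\rangle$. For the kinetic term, substitution of the control \refeqn{tracking:control} into \refeqn{Omega:dot} yields $\dot e_\Omega=-k_Re_R-k_\Omega e_\Omega$, so the derivative of $\frac{1}{2}\|e_\Omega\|^2$ is $-k_R\langle e_R,e_\Omega\rangle-k_\Omega\|e_\Omega\|^2$. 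The two cross terms cancel, leaving $\dot V_0=-k_\Omega\|e_\Omega\|^2$.

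For part 2, I would write $\dot V=\dot V_0+\mu\frac{d}{dt}\langle e_R,e_\Omega\rangle$ and use the previous lemma for $\dot e_R=C(R^TR_d)e_\Omega+e_R\times\Omega_d$ together with the closed-loop expression for $\dot e_\Omega$. The term $\mu\langle e_R\times\Omega_d,e_\Omega\rangle$ should be rearranged as $\mu\langle e_R,\Omega_d\times e_\Omega\rangle$, and since $e_\Omega\times\Omega_d=\Omega\times\Omega_d$ the contribution from the feedforward $\Omega\times\Omega_d$ inside $\tau$ cancels this skew term exactly. What remains is
\[
\dot V=-k_\Omega\|e_\Omega\|^2-\mu k_R\|e_R\|^2-\mu k_\Omega\langle e_R,e_\Omega\rangle+\mu\langle C(R^TR_d)e_\Omega,e_\Omega\rangle.
\]
Under the hypothesis $\|R-R_d\|\le2\sqrt{2a}$, Lemma \ref{lemma:eR:ER}(3) supplies $\|e_R\|^2\ge\frac{1-a}{2}\|E_R\|^2$, part (2) supplies $\|e_R\|\le\frac{1}{\sqrt 2}\|E_R\|$ for the cross term via Cauchy--Schwarz, and the operator bound $\|C(R^TR_d)\|_2\le 1$ cited from \cite{LeeLeoPICDC10} gives $\mu\langle Ce_\Omega,e_\Omega\rangle\le\mu\|e_\Omega\|^2$. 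Collecting these bounds produces exactly $-z^TW_3z$ with $W_3$ as in (\ref{def:W3}).

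Finally, I would verify positive-definiteness of $W_3$. The diagonal entries are positive provided $\mu<k_\Omega$, which is implied by the stated bound on $\mu$. The determinant condition $\frac{(1-a)}{2}\mu k_R(k_\Omega-\mu)>\frac{\mu^2 k_\Omega^2}{8}$ rearranges to $\mu<\frac{4(1-a)k_Rk_\Omega}{4(1-a)k_R+k_\Omega^2}$, which is precisely the hypothesis on $\mu$. The main obstacle I anticipate is bookkeeping the signs and the $\frac{1}{\sqrt 2}$ factors when converting between $\|E_R\|$ and $\|e_R\|$ in the cross term, and confirming that the $\Omega_d$-dependent skew contributions cancel cleanly; once those cancellations are verified, the rest of the argument is a direct Cauchy--Schwarz estimate followed by a $2\times 2$ Sylvester check.
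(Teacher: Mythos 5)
Your proposal is correct and follows essentially the same route as the paper: the paper derives the single expression \refeqn{compute:Vdot} by substituting the control into $\dot V$ and then obtains \refeqn{V0_dot} by setting $\mu=0$, whereas you differentiate $V_0$ and the cross term $\mu\langle e_R,e_\Omega\rangle$ separately, but the attitude-term computation via $Q=R_d^TR$, the cancellation of the $\Omega_d$-dependent terms, the bounds $\|C(R^TR_d)\|_2\le 1$, Cauchy--Schwarz with Lemma \ref{lemma:eR:ER}, and the Sylvester check for $W_3$ are all identical. One small fix: substituting \refeqn{tracking:control} into \refeqn{Omega:dot} gives $\dot e_\Omega=-k_Re_R-k_\Omega e_\Omega+\Omega\times\Omega_d$, not $-k_Re_R-k_\Omega e_\Omega$; the omitted term is orthogonal to $e_\Omega$, so your formula for $\frac{d}{dt}\tfrac12\|e_\Omega\|^2$ and hence $\dot V_0$ still stands, and indeed it is exactly this term that your part-2 argument needs in order to cancel $\mu\langle e_R\times\Omega_d,e_\Omega\rangle=-\mu\langle e_R,\Omega\times\Omega_d\rangle$, so it should be retained in the stated closed-loop error dynamics.
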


Next, we show that the proposed control system yields exponential stability.
\begin{theorem}\label{theorem:main}
Choose any positive numbers $k_R$, $k_\Omega$, $a$  and $\mu$ such that
\begin{equation}\label{ineq:a}
0 < a<1
\end{equation}
 and
\begin{equation}\label{ineq:epsilon}
0 < \mu < \frac{4(1-a)k_R k_\Omega}{4(1-a) k_R + k_\Omega^2}.
\end{equation}
Let
\begin{equation}\label{def:sigma}
\sigma = \frac{\lambda_{\min}(W_3)}{ \lambda_{\max}(W_2)} >0,
\end{equation}
where the matrices $W_2$ and $W_3$ are  defined in (\ref{def:W1}) and (\ref{def:W3}), respectively.

Then, the zero equilibrium of the tracking errors $(E_R,e_\Omega)=(0,0)$ is locally exponentially stable, and for any initial state $(R(0), \Omega(0)) \in \SO \times \mathbb R^3$ satisfying
\begin{equation}\label{V:ini}
V_0(R(0), \Omega(0), 0) \leq 2 a k_R,
\end{equation}
the closed-loop trajectory $(R(t),\Omega(t))$ for the control (\ref{tracking:control}) satisfies
\begin{gather}
V_0(R(t), \Omega(t), t) \leq V_0(R(0), \Omega(0), 0) \leq 2 a k_R, \label{V0:lessthan:akR}\\
V(R(t), \Omega(t), t) \leq V(R(0), \Omega(0), 0) e^{-\sigma t} \label{conclusion:theorem}
\end{gather}
for all $t\geq 0$. Furthermore, both the attitude tracking error $\|R(t) -R_d(t)\|$ and the body angular velocity tracking error $ \| \Omega(t) - \Omega_d(t)\|$ converge exponentially to zero at the exponential rate of $(-\sigma/2)$ as $t$ tends to infinity, i.e., \EditTL{there exists a  constant $c>0$ such that
\begin{equation}
\| E_R(t)\| +  \| e_\Omega(t) \|
\leq c\left( \| E_R(0)\| + \| e_\Omega(0) \|\right)  e^{-\frac{\sigma}{2} t}
\label{eqn:exp0}
\end{equation}
for  all $t\geq 0$ and  all initial state $(R(0), \Omega(0))$ satisfying (\ref{V:ini}).}

\begin{proof}
Take any positive numbers $k_R$, $k_\Omega$, $a$ and $\mu$ that satisfy (\ref{ineq:a}) and (\ref{ineq:epsilon}).   Since
\[
\frac{4(1-a)k_R k_\Omega}{4(1-a) k_R + k_\Omega^2} \leq \frac{4(1-a)k_R k_\Omega}{2\sqrt{4(1-a) k_R k_\Omega^2}} < \sqrt{k_R},
\]
we have
\[
0 < \mu < \sqrt{k_R}
\]
 by (\ref{ineq:epsilon}).  According to Lemma \ref{lemma:V:positive}, the matrix $W_1$ defined in (\ref{def:W1}) is positive-definite and both (\ref{W1:less:than:V}) and (\ref{V0:less:V}) hold for all $(R,\Omega) \in \SO \times \mathbb R^3$ and $t\geq 0$. It follows that the function  $V$ is positive-definite and decrescent.

Choose any initial state $(R(0),\Omega(0))$ satisfying (\ref{V:ini}).
By (\ref{eqn:V0_dot})  in Lemma \ref{lemma:Vdot:less},
$V_0(R(t),\Omega(t),t)$ is a non-increasing function of time along the closed-loop trajectory and (\ref{V0:lessthan:akR}) holds for all $t\geq 0$, which implies
\[
\|R(t) - R_d(t)\|^2 \leq \frac{4}{k_R}V_0(R(t), \Omega(t), t) \leq 8a
\]
for all $t\geq 0$. Since (\ref{eqn:ER2a}) holds, by Lemma \ref{lemma:Vdot:less} we have (\ref{V:dot:lessthan:W3}) with  $W_3$ being positive-definite.

The region of attraction to an asymptotically stable equilibrium is often estimated by a sub-level set of the Lyapunov function~\cite{Kha96}. While the estimate of the region of attraction given by (\ref{V:ini}) is not a sub-level set of the Lyapunov function $V(R,\Omega,t)$, it is positively invariant as $V_0(R(t),\Omega(t),t)$ is non-increasing in $t$. Therefore, for any initial condition satisfying (\ref{V:ini}), both of (\ref{W1:less:than:V}) and (\ref{V:dot:lessthan:W3}) hold true for all $t\geq 0$, and the exponential convergence is guaranteed as follows.

From (\ref{W1:less:than:V}) and (\ref{V:dot:lessthan:W3}), it follows
\begin{align*}
\dot V(R(t), \Omega (t), t)  
&\leq -\sigma V(R(t),\Omega(t),t)
\end{align*}
for all $t\geq 0$, where $\sigma$ is defined in (\ref{def:sigma}). This shows (\ref{conclusion:theorem}). Next, we show \refeqn{exp0}. From (\ref{V0:less:V}) and (\ref{conclusion:theorem}),
\[
V_0(R(t), \Omega(t), t) ) \leq \frac{\sqrt{k_R}+\mu}{\sqrt{k_R}-\mu} V_0(R(0), \Omega (0), 0) e^{-\sigma t},
\]
\EditTL{which implies
\begin{align*}
\frac{k_R}{4}&\| E_R(t)\|^2 + \frac{1}{2} \| e_\Omega(t) \|^2 \nonumber\\
&\leq \frac{\sqrt{k_R}+\mu}{\sqrt{k_R}-\mu}\left( \frac{k_R}{4}\| E_R(0)\|^2 + \frac{1}{2} \| e_\Omega(0) \|^2\right)  e^{-\sigma t}.
\end{align*}
We can view $\sqrt{\frac{k_R}{4}\|A\|^2 + \frac{1}{2}\|x\|^2}$ for $(A,x)\in\Re^{3\times 3}\times \Re^3$ as a norm on $\Re^{3\times 3}\times \Re^3$. As all norms on a finite-dimensional vector space are equivalent~\cite{Lan93},  there are positive constants $c_1$ and $c_2$ such that
\begin{equation}\label{norm:equivalence}
c_1 (\|A\| + \|x\|) \leq \sqrt{\frac{k_R}{4}\|A\|^2 + \frac{1}{2}\|x\|^2} \leq c_2 (\|A\| + \|x\|)
\end{equation}
for all $(A,x)\in\Re^{3\times 3}\times \Re^3$ . Hence, letting
\[
c = \frac{c_2}{c_1}\sqrt{\frac{\sqrt{k_R}+\mu}{\sqrt{k_R}-\mu}},
\]
we have  \refeqn{exp0} for all $t\geq 0$ and all initial states satisfying (\ref{V:ini}).}
\end{proof}
\end{theorem}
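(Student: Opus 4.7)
The plan is to use $V_0$ and $V$ in complementary roles: $V_0$ will certify forward invariance of the admissible set $\{V_0\leq 2ak_R\}$, while the full Lyapunov function $V$ with its cross term $\mu\langle e_R,e_\Omega\rangle$ will supply the exponential decay rate. Local exponential stability of $(E_R,e_\Omega)=(0,0)$ then follows as a special case, since any sufficiently small neighborhood of the equilibrium is contained in $\{V_0\leq 2ak_R\}$.

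First I would carry out a short algebraic check that (\ref{ineq:epsilon}) automatically forces $\mu<\sqrt{k_R}$: a single AM-GM estimate on $4(1-a)k_R + k_\Omega^2$ bounds the right-hand side of (\ref{ineq:epsilon}) by $\sqrt{k_R}$. This unlocks Lemma \ref{lemma:V:positive}, giving the sandwich (\ref{W1:less:than:V}) and the two-sided comparison (\ref{V0:less:V}) between $V$ and $V_0$. Next I would establish (\ref{V0:lessthan:akR}): by Lemma \ref{lemma:Vdot:less}(1), $\dot V_0 = -k_\Omega\|e_\Omega\|^2\leq 0$, so $V_0$ is non-increasing along the closed-loop trajectory, and the bound $V_0\leq 2ak_R$ propagates in time. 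The definition (\ref{def:V0}) then forces $\|R(t)-R_d(t)\|\leq 2\sqrt{2a}$, which is precisely the hypothesis (\ref{eqn:ER2a}) of Lemma \ref{lemma:Vdot:less}(2), so $\dot V\leq -z^T W_3 z$ with positive-definite $W_3$ holds for all $t\geq 0$.

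At that point the rest is routine. Combining the upper bound $V\leq z^T W_2 z$ from (\ref{W1:less:than:V}) with $\dot V\leq -z^T W_3 z$ yields $\dot V\leq -\sigma V$ with $\sigma=\lambda_{\min}(W_3)/\lambda_{\max}(W_2)$, and Gronwall produces (\ref{conclusion:theorem}). Pushing this through the lower bound in (\ref{V0:less:V}) transfers the exponential decay to $V_0$, hence to $\tfrac{k_R}{4}\|E_R(t)\|^2+\tfrac12\|e_\Omega(t)\|^2$. A norm-equivalence step on $\Re^{3\times 3}\times\Re^3$ then delivers (\ref{eqn:exp0}) with rate $\sigma/2$, the halving arising from the square root.

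The main obstacle is not computational but structural: because of the indefinite cross term $\mu\langle e_R,e_\Omega\rangle$, the set $\{V_0\leq 2ak_R\}$ chosen as the estimate of the region of attraction is not a sub-level set of $V$, so the textbook sub-level-set argument for positive invariance cannot be invoked directly on $V$. This is exactly why the two-function strategy, with $V_0$ policing invariance and $V$ delivering the rate, is needed; everything else is careful bookkeeping with the earlier lemmas.
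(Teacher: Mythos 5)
Your proposal is correct and follows essentially the same route as the paper's proof: the same AM-GM check that (\ref{ineq:epsilon}) forces $\mu<\sqrt{k_R}$, the same use of $V_0$ (via $\dot V_0=-k_\Omega\|e_\Omega\|^2$) to certify invariance of $\{V_0\leq 2ak_R\}$ and hence (\ref{eqn:ER2a}), the same combination of $W_2$ and $W_3$ to get $\dot V\leq -\sigma V$, and the same transfer through (\ref{V0:less:V}) plus norm equivalence to obtain (\ref{eqn:exp0}). Even your structural remark---that the estimated region of attraction is not a sub-level set of $V$, which is why $V_0$ must police invariance while $V$ supplies the rate---mirrors the paper's own observation.
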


The following corollary characterizes the region of attraction that guarantees exponential stability, estimated by (\ref{V:ini}), and it discusses how to choose the values of the control parameters $k_R$ and $k_\Omega$ for a given initial state.
\begin{corollary}\label{corollary2:main}
Given an arbitrary initial state $(R(0), \Omega(0)) \in \SO \times \mathbb R^3$ satisfying
\begin{equation}
\| R(0) - R_d(0)\| < 2\sqrt 2,  \label{eqn:condR0}
\end{equation}
take any $k_R$ such that
\begin{equation}\label{Cor2:cond2}
\frac{2 \| \Omega (0) - \Omega_d(0)\|^2}{8 - \|R(0) - R_d(0)\|^2} < k_R.
\end{equation}
Then, $V_0(R(0), \Omega (0), 0) < 2k_R$, and the conclusion of Theorem \ref{theorem:main} holds true for any $k_\Omega$, $a$ and $\mu$ satisfying (\ref{ineq:epsilon}) and
\begin{equation}\label{Cor2:cond3}
\frac{V_0(R(0), \Omega (0),0)}{2k_R} \leq a <1.
\end{equation}

 \begin{proof}
 Straightforward.
 \end{proof}
 \end{corollary}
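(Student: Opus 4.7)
The plan is to unpack the definition of $V_0$ at the initial time, show that condition (\ref{Cor2:cond2}) is equivalent to the strict inequality $V_0(R(0),\Omega(0),0) < 2k_R$, and then observe that any $a$ satisfying (\ref{Cor2:cond3}) makes the hypothesis (\ref{V:ini}) of Theorem \ref{theorem:main} hold, so that we can directly invoke that theorem.

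First, I would write out
\[
V_0(R(0),\Omega(0),0) = \tfrac{k_R}{4}\|R(0)-R_d(0)\|^2 + \tfrac{1}{2}\|\Omega(0)-\Omega_d(0)\|^2
\]
and then note that the inequality $V_0(R(0),\Omega(0),0) < 2k_R$ is equivalent to
\[
\tfrac{1}{2}\|\Omega(0)-\Omega_d(0)\|^2 < \tfrac{k_R}{4}\bigl(8-\|R(0)-R_d(0)\|^2\bigr).
\]
The assumption (\ref{eqn:condR0}) that $\|R(0)-R_d(0)\| < 2\sqrt{2}$ makes the factor $8-\|R(0)-R_d(0)\|^2$ strictly positive, so we may divide and rearrange to obtain exactly (\ref{Cor2:cond2}). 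Thus the hypothesis (\ref{Cor2:cond2}) is precisely $V_0(R(0),\Omega(0),0) < 2k_R$.

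Next, since $V_0(R(0),\Omega(0),0)/(2k_R) < 1$, the half-open interval $[V_0(R(0),\Omega(0),0)/(2k_R),\,1)$ required by (\ref{Cor2:cond3}) is nonempty, so a choice of $a$ is available. For any such $a$ we have $V_0(R(0),\Omega(0),0) \leq 2ak_R$, which is exactly the initial-condition hypothesis (\ref{V:ini}) of Theorem \ref{theorem:main}. Since $k_\Omega$ and $\mu$ are required to satisfy (\ref{ineq:epsilon}), every hypothesis of Theorem \ref{theorem:main} is in place, and its conclusion applies verbatim.

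The proof is essentially a rearrangement and a direct appeal to the main theorem; there is no real obstacle. The only thing worth watching is the strictness: condition (\ref{Cor2:cond2}) is a strict inequality, which is precisely what is needed so that $V_0(R(0),\Omega(0),0)/(2k_R) < 1$ and the allowed range for $a$ in (\ref{Cor2:cond3}) is nonempty.
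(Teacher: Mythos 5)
Your proposal is correct and fills in exactly the routine algebra the paper leaves implicit by writing ``Straightforward'': rearranging the definition of $V_0$ under the strict positivity of $8-\|R(0)-R_d(0)\|^2$ to see that (\ref{Cor2:cond2}) is equivalent to $V_0(R(0),\Omega(0),0)<2k_R$, and then choosing $a$ via (\ref{Cor2:cond3}) so that (\ref{V:ini}) holds and Theorem \ref{theorem:main} applies. No gaps; this matches the paper's intended argument.
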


The inequalites \refeqn{condR0} and (\ref{Cor2:cond2}) imply that the proposed control system can handle any initial attitude excluding the set $\{ R\in \SO\mid\| R - R_d\| = 2\sqrt 2\}$, which is equal to $C_\pi R_d$, where $C_\pi$ is the conjugacy class containing rotations through angle $\pi$. Since $\dim (C_\pi  R_d) = \dim C_\pi = 2$ while $\dim\SO = 3$,  one can  claim that for a given $R_d \in \SO$, the set $\{R \in \SO \mid \|R-R_d\| < 2\sqrt 2\}$ almost covers the entire space $\SO$.  Therefore,  Corollary \ref{corollary2:main} implies that our tracking control law can handle a large set of initial attitude tracking errors, excluding a set of  measure zero only. This property is referred to as \textit{almost} global exponential stability~\cite{Kod88}, and it is considered as the strongest stability property for smooth attitude controls, due to the topological obstruction that prohibits global attractivity with smooth vector fields on $\SO$~\cite{BhaBerSCL00}.

While there have been attitude controllers achieving {almost} global exponential stability~\cite{LeeITAC15,LeeSCL12}, they showed the exponential convergence for the auxiliary attitude error vector $e_R$, which is not necessarily proportional to the attitude error~\cite{LeeSCL12}. Instead, the stability analysis presented in this section guarantees the exponential convergence of the attitude error $E_R$ satisfying $\|E_R\|=\|R-R_d\|=\|I_{3\times 3}-R^T R_d\|$.

\subsection{Global Tracking Strategy}\label{sec:SGT}

In practice, there is a limited chance that the initial attitude is placed in the low-dimensional set of $C_\pi  R_d$ that does not guarantee the exponential convergence. However, it is shown that the controlled trajectories may be strongly affected by the existence of the stable manifolds of points in  $C_\pi  R_d$, and the rate of convergence can be reduced significantly~\cite{LeeLeoPICDC11,LeeSCL12}.

To avoid these issues, hybrid attitude control systems have been introduced to achieve global asymptotic stability. These are based on a class of attitude error functions, referred to as synergistic potential functions, that are constructed by stretching and scaling the popular trace form of the attitude error function~\cite{MayTeeITAC13}, or by expelling the controlled attitude trajectories away from the undesired equilibria~\cite{LeeITAC15}. In these approaches, the topological obstruction to global attractivity is avoided by using discontinuities of the control input with respect to time. However, attitude actuators are constrained by limited bandwidth, and abrupt changes in the control torque may excite the unmodeled dynamics and cause undesired behaviors, such as the vibrations of solar panels in satellites. Interestingly, it has not been known whether a more general class of time-varying feedback could accomplish the global stabilization task without introducing such discontinuities~\cite{MayTeeA13}.

In this section, we present an alternative approach to achieve global attractivity. The key idea is to introduce a \textit{shifted} desired attitude, and design an attitude control system to follow the shifted trajectories instead of the original attitude command. The shifted desired trajectory is carefully constructed with the conjugacy class discussed in Section \ref{sec:PF} to guarantee the convergence to the desired attitude trajectory from any initial attitude. In contrast to hybrid attitude controls where the configuration error function defining the controlled dynamics is switched instantaneously, the proposed approach adjusts the desired attitude trajectory continuously in time, thereby avoiding any jump in the control input.

Consider the attitude control system presented in Theorem \ref{theorem:main}. Suppose the initial condition $(R(0),\Omega(0))$ satisfies (\ref{V:ini}). Then, the exponential convergence is guaranteed, and there is no need for modification. As such, this section focuses on the other case when the initial condition does not satisfy the given estimate of the region of attraction, i.e.,
\begin{equation}\label{eqn:V0_GT}
V_0(R(0), \Omega(0), 0) > 2 a k_R.
\end{equation}

Now, we introduce the \textit{shifted} desired attitude. By (\ref{R:isometry}), (\ref{Z:distance}), and (\ref{C:union}), there exists a unique $\theta_0 \in [0, \pi]$ such that $R(0)R_d(0)^T \in C_{\theta_0}$, i.e.
\begin{equation}
R(0) = U_0 Z_{\theta_0}U_0^T R_d(0)\label{eqn:R0:U:thetab}
\end{equation}
for some $U_0 \in \SO$. In other words, the initial attitude and the initial desired attitude is related by the fixed-axis rotation by the angle $\theta_0$ about the third column of $U_0$.

Next, for a constant $\epsilon\in(0,1)$, choose an angle $\theta_{b_0} \in (0, \theta_0) $ such that
\begin{equation}\label{eqn:newly:added:theta}
1-\cos(\theta_0 - \theta_{b_0}) \leq 2\epsilon a.
\end{equation}
Note that such an angle $\theta_{b_0}$ always exists for any  $\epsilon,a\in(0,1)$ since one has only to make $|\theta_0 - \theta_{b_0}|$ sufficiently small. Then, define a time-varying angle $\theta_b(t)$ as
\begin{equation}
\theta_b(t) = \theta_{b_0}e^{-\frac{\gamma}{2} t},\label{eqn:theta:b:varying}
\end{equation}
where $\gamma >0$ is a positive constant satisfying
\begin{equation}\label{eqn:gamma}
\gamma < \frac{4}{\theta_{b_0}}\sqrt{a k_R (1-\epsilon)}.
\end{equation}
Using $\theta_b(t)$, define the shifted desired attitude as
\begin{equation}
\tilde R_d(t) = U_0 Z_{\theta_b(t)}U_0^T R_d(t).\label{eqn:Rd:shifted:varying}
\end{equation}
Therefore, $R_d(t)^T \tilde R_d(t)$ belongs to the conjugacy class $C_{\theta_b(t)}$. The properties of the shifted desired attitude are summarized as follows.

\begin{lemma}\label{lem:Rdtilde}
Consider the shifted desired attitude trajectory given by \refeqn{Rd:shifted:varying}.
\begin{itemize}
\item[(i)] The initial attitude error from the shifted desired trajectory satisfies
\begin{equation}
\|R(0)-\tilde R_d(0)\| = 2\sqrt{1-\cos(\theta_0-\theta_{b_0})}< 2\sqrt{2a}.\label{RminusRtilde}
\end{equation}
\item[(ii)] The difference between the shifted desired attitude and the true desired attitude exponentially converges to zero as
\begin{equation}\label{eqn:Rdtilde:Rd}
\|\tilde R_d(t) - R_d(t)\|  = 2\sqrt{1-\cos\theta_b(t)} \leq \sqrt{2} \theta_{b_0} e^{-\frac{\gamma}{2} t}.
\end{equation}
\item[(iii)] The time-derivative of the shifted desired attitude trajectory is given by
\begin{align}
\dot {\tilde R}_d(t) = \tilde R_d(t) \hat{\tilde\Omega}_d(t),
\end{align}
where the shifted desired angular velocity $\tilde\Omega_d(t)$ is
\begin{equation}\label{eqn:Omega:tilde:varying}
\tilde\Omega_d(t) = \Omega_d(t) + \dot\theta_b(t) \tilde R_d(t)^T U_0 e_3.
\end{equation}

\item[(iv)] The difference between the shifted desired angular velocity and the true desired angular velocity is given by
\begin{equation}\label{eqn:Wdtilde:Wd}
\|\tilde\Omega_d(t)-\Omega_d(t)\| = |\dot\theta_b(t)|
=\frac{\gamma}{2}\theta_{b_0} e^{-\frac{\gamma}{2} t}.
\end{equation}

\end{itemize}

\begin{proof}
From (\ref{R:isometry}) and (\ref{Z:distance}),
\begin{align*}
\|R(0)-\tilde R_d(0)\|& =
\| U_0 Z_{\theta_0} U_0^T R_d(0)-U_0 Z_{\theta_{b_0}} U_0^T R_d(0)\|\\
&= \|Z_{\theta_0}-Z_{\theta_{b_0}}\| = \sqrt{1-\cos(\theta_0-\theta_{b_0})},
\end{align*}
which shows (\ref{RminusRtilde}) from \refeqn{newly:added:theta}. Similarly,
\[
\|\tilde R_d(t) - R_d(t)\| = \|Z_{\theta_b(t)} - I_{3\times 3}\| = 2\sqrt{1-\cos\theta_b(t)}.
\]
From the fact that $\frac{2}{\pi^2}x^2 \leq 1-\cos x \leq \frac{1}{2}x^2$ for any $x\in[0,\pi]$,  the last inequality of \refeqn{Rdtilde:Rd} follows.

Next, the time-derivative of the shifted reference attitude is
\[
\frac{d}{dt}\tilde R_d(t) = \dot\theta_b(t) U_0 \hat e_3 Z_{\theta_b(t)}  U_0^T R_d(t)+\tilde R_d(t)\hat \Omega_d(t),
\]
which can be  rewritten as
\[
\frac{d}{dt}\tilde R_d(t) = \dot\theta_b(t) U_0 \hat e_3 U_0^T \tilde R_d(t)+\tilde R_d(t)\hat \Omega_d(t)
\]
by \refeqn{Rd:shifted:varying}.
Using the property $\widehat {Rx}=R\hat x R^T$ for any $R\in\SO$ and $x\in\Re^3$ repeatedly,
\begin{align*}
\frac{d}{dt}\tilde R_d(t) &= \dot\theta_b(t)  \widehat {U_0e_3} \tilde R_d(t)+\tilde R_d(t)\hat \Omega_d(t)\\
& = \dot\theta_b(t) \tilde R_d(t) (\tilde R_d(t)^TU_0e_3)^\wedge+\tilde R_d(t)\hat \Omega_d(t),
\end{align*}
and this shows \refeqn{Omega:tilde:varying}. It is straightforward to show \refeqn{Wdtilde:Wd} as $\|\tilde R_d(t)^T U_0 e_3\|=1$ for any $t\geq 0$.
\end{proof}
\end{lemma}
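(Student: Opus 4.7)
The plan is to handle the four items in order, exploiting the explicit product form $\tilde R_d(t) = U_0 Z_{\theta_b(t)} U_0^T R_d(t)$ together with the isometry identity (\ref{R:isometry}) and the distance formula (\ref{Z:distance}), both of which reduce everything to elementary trigonometry in the variable $\theta_b$.

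For part (i), I would substitute the definition of $R(0) = U_0 Z_{\theta_0} U_0^T R_d(0)$ from \refeqn{R0:U:thetab} and $\tilde R_d(0) = U_0 Z_{\theta_{b_0}} U_0^T R_d(0)$, factor out $U_0$ on the left and $U_0^T R_d(0)$ on the right, and apply (\ref{R:isometry}) twice to obtain $\|R(0)-\tilde R_d(0)\| = \|Z_{\theta_0} - Z_{\theta_{b_0}}\|$. The formula (\ref{Z:distance}) then gives the equality, and the strict bound $<2\sqrt{2a}$ follows from \refeqn{newly:added:theta} combined with $\epsilon<1$. Part (ii) is similar: I factor $\tilde R_d(t) - R_d(t) = U_0(Z_{\theta_b(t)} - I_{3\times 3}) U_0^T R_d(t)$, use the isometry identity twice (noting $I_{3\times 3} = Z_0$) to get $2\sqrt{1-\cos\theta_b(t)}$, then invoke the elementary inequality $1-\cos x \leq x^2/2$ on $[0,\pi]$ and the explicit form $\theta_b(t) = \theta_{b_0} e^{-\gamma t/2}$ from \refeqn{theta:b:varying} to finish.

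For part (iii), I would differentiate $\tilde R_d(t) = U_0 Z_{\theta_b(t)} U_0^T R_d(t)$ using the product rule. Since $Z_\theta = \exp(\theta \hat e_3)$ and $\hat e_3$ commutes with $Z_\theta$, one has $\dot Z_{\theta_b} = \dot\theta_b \hat e_3 Z_{\theta_b}$, yielding
\[
\dot{\tilde R}_d(t) = \dot\theta_b(t)\, U_0 \hat e_3 Z_{\theta_b(t)} U_0^T R_d(t) + U_0 Z_{\theta_b(t)} U_0^T R_d(t) \hat\Omega_d(t).
\]
The second term is already $\tilde R_d(t)\hat\Omega_d(t)$. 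For the first, I would insert $U_0^T U_0$ and apply the conjugation identity $\widehat{R x} = R \hat x R^T$ to rewrite $U_0 \hat e_3 U_0^T = \widehat{U_0 e_3}$, then use the same identity once more in the form $\tilde R_d^T \widehat{U_0 e_3}\, \tilde R_d = \widehat{\tilde R_d^T U_0 e_3}$ to pull the hat past $\tilde R_d$ onto the right, giving the claimed form $\tilde R_d \hat{\tilde\Omega}_d$ with $\tilde\Omega_d$ as in \refeqn{Omega:tilde:varying}. Part (iv) is then immediate: from (iii), $\tilde\Omega_d(t) - \Omega_d(t) = \dot\theta_b(t)\, \tilde R_d(t)^T U_0 e_3$, and since $\tilde R_d(t)^T U_0 e_3$ is a unit vector (as $\tilde R_d(t) \in \SO$ and $\|e_3\|=1$), the norm collapses to $|\dot\theta_b(t)|$, which is computed by differentiating \refeqn{theta:b:varying}.

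There is no genuine obstacle here; the only place that requires care is part (iii), where one must be disciplined about where the hat map sits relative to $\tilde R_d$ and must invoke the conjugation identity in the correct direction so that the final expression matches the template $\tilde R_d \hat{\tilde\Omega}_d$ consistent with the kinematics \refeqn{Rd_dot}. The bounds in (i) and (ii) are each one substitution away from the definitions, so they can be kept brief.
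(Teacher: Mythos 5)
Your proposal is correct and follows essentially the same route as the paper: factoring via the isometry (\ref{R:isometry}) and the distance formula (\ref{Z:distance}) for parts (i)--(ii), and for part (iii) the product rule with $\dot Z_{\theta_b}=\dot\theta_b\hat e_3 Z_{\theta_b}$ followed by two applications of $\widehat{Rx}=R\hat xR^T$ to land on the form $\tilde R_d\hat{\tilde\Omega}_d$, with (iv) then immediate from $\|\tilde R_d^TU_0e_3\|=1$. No gaps.
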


The motivation for the proposed shifted desired attitude is that the initial attitude error, namely
\begin{equation*}
\|E_R(0)\|=\|R(0)-R_d(0)\| = \| Z_{\theta_0} - I_{3\times 3} \| = 2\sqrt{1-\cos\theta_0},
\end{equation*}
is replaced by the shifted error
\[
\|R(0)-\tilde R_d(0)\|=2\sqrt{1-\cos(\theta_0-\theta_{b_0})},
\]
that is strictly less than $2\sqrt{2a}$ from (\ref{RminusRtilde}). In other words, the inequality \refeqn{newly:added:theta} ensures that the initial value of the shifted desired attitude, namely $\tilde R_d(0)$ is sufficiently close to the initial attitude $R(0)$. As such, even when the initial attitude error $\|R(0)-R_d(0)\|$ is close or equal to $2\sqrt{2}$, we can replace  it with the shifted desired attitude so as to satisfy \refeqn{condR0}. Furthermore, as shown by \refeqn{Rdtilde:Rd} and \refeqn{Wdtilde:Wd}, the shifted desired trajectories $(\tilde R_d(t),\tilde\Omega_d(t))$ exponentially converge to their true desired trajectories $(R_d(t),\Omega_d(t))$ as $t$ tends to infinity. This is because $\theta_b(t)$ exponentially converges to zero from \refeqn{theta:b:varying}. Therefore, we can design a control system to follow the shifted desired trajectories, while ensuring asymptotic convergence to the true desired trajectories.

More explicitly, the shifted attitude error variables $\tilde E_R\in\Re^{3\times 3}$ and $\tilde e_R\in\Re^3$ are defined as
\begin{align}
\tilde E_R &= R - \tilde R_d,\label{eqn:ER:tilde}\\
\tilde e_R &= \frac{1}{2} (\tilde R_d^T R - R^T\tilde R_d)^\vee.\label{eqn:eR:tilde}
\end{align}
Also, the shifted angular velocity error is defined as
\begin{equation}\label{eqn:eW:tilde}
\tilde e_\Omega = \Omega -\tilde\Omega_d\in\Re^3.
\end{equation}
As with (\ref{tracking:control}), the control input for the shifted desired reference can be designed as
\begin{equation}\label{def:tilde:tau:varying}
\tilde \tau =   - (\MI \Omega)\times \Omega + \MI ( - k_R \tilde e_R -  k_\Omega \tilde e_\Omega + \Omega \times \tilde\Omega_d + \dot{\tilde \Omega}_d).
\end{equation}
From Theorem \ref{theorem:main}, for any initial condition satisfying
\begin{equation}\label{eqn:ROA_2}
\frac{k_R}{4}\|R(0)-\tilde R_d(0)\|^2 + \frac{1}{2}\|\Omega(0)-\tilde \Omega_d(0)\|^2 \leq 2a k_R,
\end{equation}
which is equivalent to (\ref{V:ini}) for the shifted desired trajectory, the trajectory of the controlled system exponentially converges to the shifted desired trajectories $(\tilde R_d(t),\tilde\Omega_d(t))$ that tends to the true desired trajectories $(R_d(t),\Omega_d(t))$. The resulting stability properties are summarized as follows.

\begin{theorem}\label{thm:main_shifted_varying}
Choose any positive constants $k_R$, $k_\Omega$, $a$, $\mu$, $\sigma$, $\epsilon$, $\theta_{b_0}$, and $\gamma$ such that (\ref{ineq:a}), (\ref{ineq:epsilon}), (\ref{def:sigma}), \refeqn{newly:added:theta} and \refeqn{gamma} are satisfied. The control input is defined as
\begin{subnumcases}{\label{eqn:tau_3} \tau =}
 - (\MI \Omega)\times \Omega + \MI ( - k_R e_R -  k_\Omega e_\Omega + \Omega \times \Omega_d + \dot \Omega_d)\nonumber\\
  \qquad\text{when } V_0(R(0),\Omega(0),0)\leq 2ak_R,\label{eqn:tau_3a}\\
- (\MI \Omega)\times \Omega + \MI ( - k_R \tilde e_R -  k_\Omega \tilde e_\Omega + \Omega \times \tilde\Omega_d + \dot{\tilde \Omega}_d)\nonumber\\
  \qquad \text{when } V_0(R(0),\Omega(0),0)> 2ak_R,\label{eqn:tau_3b}
\end{subnumcases}
where $\tilde e_R$ and $\tilde e_\Omega$ are constructed by \refeqn{eR:tilde} and \refeqn{eW:tilde}, respectively.

Then, the zero equilibrium of the tracking errors $(E_R,e_\Omega)=(0,0)$ is exponentially stable. More specifically, when $V_0(R(0),\Omega(0),0)\leq 2ak_R$, the tracking errors $(E_R(t),e_\Omega(t))$ converge to zero exponentially according to \refeqn{exp0}. \EditTL{Otherwise, when $V_0(R(0),\Omega(0),0)> 2ak_R$, there exists a positive constant $c>0$ such that, for any initial state $(R(0), \Omega(0)) \in \SO \times \mathbb R^3$ satisfying \refeqn{ROA_2},
\begin{align}
\|E_R(t)\|+\|e_\Omega(t)\| \leq c(\|E_R(0)\|+\|e_\Omega(0)\|)\exp^{-\frac{1}{2}\min\{\sigma,\gamma\}t},\label{eqn:exp2}
\end{align}
for all $t\geq 0$.}

\end{theorem}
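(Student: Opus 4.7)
The plan is to dispose of the two cases separately, using Theorem~\ref{theorem:main} directly for the first and applying it to the shifted reference for the second. In the first case, $V_0(R(0),\Omega(0),0)\leq 2ak_R$, the controller \refeqn{tau_3a} is literally (\ref{tracking:control}) and the hypothesis (\ref{V:ini}) is satisfied, so the bound \refeqn{exp0} is immediate from Theorem~\ref{theorem:main}.

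For the second case, $V_0(R(0),\Omega(0),0)>2ak_R$, I would first apply Theorem~\ref{theorem:main} to the shifted reference trajectory $(\tilde R_d(t),\tilde\Omega_d(t))$ defined in \refeqn{Rd:shifted:varying} and \refeqn{Omega:tilde:varying}. Lemma~\ref{lem:Rdtilde}(iii) verifies that this pair satisfies the kinematic constraint \refeqn{Rd_dot} required of a reference; the control \refeqn{tau_3b} is structurally identical to (\ref{tracking:control}) with $(R_d,\Omega_d,e_R,e_\Omega)$ replaced by $(\tilde R_d,\tilde\Omega_d,\tilde e_R,\tilde e_\Omega)$; and the hypothesis \refeqn{ROA_2} is exactly (\ref{V:ini}) written for the shifted reference. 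Theorem~\ref{theorem:main} therefore produces a constant $c_1>0$ with $\|\tilde E_R(t)\|+\|\tilde e_\Omega(t)\|\leq c_1(\|\tilde E_R(0)\|+\|\tilde e_\Omega(0)\|)e^{-\sigma t/2}$ for all $t\geq 0$.

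To pass from the shifted errors back to the true errors, I would use $E_R=\tilde E_R+(\tilde R_d-R_d)$ and $e_\Omega=\tilde e_\Omega+(\tilde\Omega_d-\Omega_d)$, together with the explicit decay bounds \refeqn{Rdtilde:Rd} and \refeqn{Wdtilde:Wd} from Lemma~\ref{lem:Rdtilde}. Applying the triangle inequality at both time $t$ and time $0$ gives
\[
\|E_R(t)\|+\|e_\Omega(t)\|\leq c_1\bigl(\|E_R(0)\|+\|e_\Omega(0)\|+c_3\theta_{b_0}\bigr)e^{-\sigma t/2}+c_4\theta_{b_0}e^{-\gamma t/2}
\]
for suitable positive constants $c_3,c_4$, which I would then group under the slower rate $e^{-\min\{\sigma,\gamma\}t/2}$.

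The main obstacle is absorbing the remaining additive $\theta_{b_0}$ term into a multiplicative constant of $\|E_R(0)\|+\|e_\Omega(0)\|$ so as to match the form of \refeqn{exp2}. This is possible because Case~2 is invoked precisely when $V_0(R(0),\Omega(0),0)>2ak_R$, which forces $\tfrac{k_R}{4}\|E_R(0)\|^2+\tfrac{1}{2}\|e_\Omega(0)\|^2>2ak_R$, and hence $\|E_R(0)\|+\|e_\Omega(0)\|$ is bounded below by a strictly positive constant depending only on $a$ and $k_R$. Dividing through by this lower bound lets the $\theta_{b_0}$ terms be absorbed into a single multiplicative constant $c$, yielding \refeqn{exp2}.
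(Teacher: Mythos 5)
Your proposal is correct, and its overall architecture is the same as the paper's: handle the first case by direct citation of Theorem~\ref{theorem:main}, handle the second by applying Theorem~\ref{theorem:main} to the shifted reference (justified by Lemma~\ref{lem:Rdtilde}(iii) and the observation that \refeqn{ROA_2} is \refeqn{V:ini} for that reference), then transfer back via the triangle inequality with the decay estimates \refeqn{Rdtilde:Rd} and \refeqn{Wdtilde:Wd}, and finally group the two rates under $e^{-\frac{1}{2}\min\{\sigma,\gamma\}t}$. The only place you diverge is the last absorption step. The paper bounds $\theta_{b_0}$ directly by the initial attitude error, using $\theta_{b_0}<\theta_0$ together with $\frac{2}{\pi^2}x^2\leq 1-\cos x$ and $\|E_R(0)\|=2\sqrt{1-\cos\theta_0}$, which gives $\theta_{b_0}<\frac{\pi}{2\sqrt{2}}\|E_R(0)\|$ (and similarly $\|\tilde E_R(0)\|<2\|E_R(0)\|$); this produces an explicit constant without any reference to which case is active. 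You instead exploit the case condition $V_0(R(0),\Omega(0),0)>2ak_R$ to get the uniform lower bound $\|E_R(0)\|+\|e_\Omega(0)\|>\min\{2\sqrt{a},\sqrt{2ak_R}\}$ and then absorb the bounded quantity $\theta_{b_0}\leq\pi$ multiplicatively. Both arguments are valid and yield a constant $c$ uniform over initial states satisfying \refeqn{ROA_2} (depending only on the chosen parameters), so your proof goes through; the paper's variant is marginally sharper in spirit since the bound scales with the initial error and does not rely on the initial error being bounded away from zero, whereas yours is slightly simpler but is tied to the case hypothesis.
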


\begin{proof}
When $V_0(R(0),\Omega(0),0)\leq 2ak_R$, the results of Theorem \ref{theorem:main} are directly applied, i.e., the tracking errors $(E_R(t),e_\Omega(t))$ converge to zero exponentially according to \refeqn{exp0}.

Next, when $V_0(R(0),\Omega(0),0)> 2ak_R$, for any initial condition satisfying \refeqn{ROA_2}, according to Theorem \ref{theorem:main}, there exists $\tilde c>0$ such that
\begin{equation}\label{eqn:tildez}
\|\tilde E_R(t)\|+\|\tilde e_\Omega(t)\| \leq \tilde c (\|\tilde E_R(0)\|+\|\tilde e_\Omega(0)\|)e^{-\frac{\sigma}{2}t}
\end{equation}
for all $t\geq 0$. From the triangle inequality,
\begin{align*}
\|E_R(t)\|&+\|e_\Omega(t)\| \leq \|\tilde E_R(t)\|+\|\tilde e_\Omega(t)\|\nonumber\\
& +
\|\tilde R_d(t)-R_d(t)\|+\|\tilde \Omega_d(t)-\Omega_d(t)\|.
\end{align*}
Substituting \refeqn{Rdtilde:Rd}, \refeqn{Wdtilde:Wd}, and \refeqn{tildez},
\begin{align}
\|E_R(t)\|+\|e_\Omega(t)\| &\leq \tilde c (\|\tilde E_R(0)\|+\|\tilde e_\Omega(0)\|)e^{-\frac{\sigma}{2}t}\nonumber\\
&\quad+\parenth{\sqrt{2}+\frac{\gamma}{2}} \theta_{b_0} e^{-\frac{\gamma}{2} t}.
\label{eqn:z0}
\end{align}

Now, we derive several inequalities to show \refeqn{exp2}.
The initial shifted attitude error satisfies
\begin{align*}
\|\tilde E_R(0)\| &\leq \|E_R(0)\|+\|R_d(0)-\tilde R_d(0)\|\\
&= \|E_R(0)\| + 2\sqrt{1-\cos\theta_b(0)}.
\end{align*}
Since $\theta_b(0)=\theta_{b_0} < \theta_0$ from the definition of $\theta_{b_0}$,
\begin{equation}
\|\tilde E_R(0)\|< \|E_R(0)\| + 2\sqrt{1-\cos\theta_0} = 2\|E_R(0)\|.\label{eqn:ER:tilde:0}
\end{equation}
 Also, from \refeqn{Wdtilde:Wd},
\begin{align}\label{eqn:tilde:eW:bound}
\|\tilde e_\Omega(0)\|&\leq \|e_\Omega(0)\| + \|\Omega_d(0)-\tilde\Omega_d(0)\|\nonumber\\
&\leq \|e_\Omega(0)\| + \frac{\gamma}{2}\theta_{b_0}.
\end{align}
Next, from the fact that $\frac{2}{\pi^2}x^2 \leq 1-\cos x \leq \frac{1}{2}x^2$ for any $x\in[0,\pi]$ and $\theta_{b_0}<\theta_0$, we have
\begin{align*}
\theta_{b_0} & \leq \frac{\pi}{\sqrt{2}}\sqrt{1-\cos\theta_{b_0}}<\frac{\pi}{\sqrt{2}}\sqrt{1-\cos\theta_{0}}=\frac{\pi}{2\sqrt{2}}\|E_R(0)\|,
\end{align*}
which is substituted into \refeqn{z0} together with \refeqn{ER:tilde:0} to obtain
\begin{align*}
&\|E_R(t)\|+\|e_\Omega(t)\| \nonumber\\
&\leq \braces{\parenth{2\tilde c + \frac{\pi}{2\sqrt{2}} \parenth{\frac{\gamma}{2}(1+\tilde c)+\sqrt{2}}}\|E_R(0)\|+\tilde c\|e_\Omega(0)\|}\\
&\quad \times e^{-\frac{1}{2}\min\{\sigma,\gamma\}t}.
\end{align*}
This shows \refeqn{exp2}, which guarantees exponential stability.
\end{proof}

Next, we characterize the region of attraction of the proposed control system as follows.
\begin{corollary}\label{cor:ROA_GT}
For the control system presented in Theorem \ref{thm:main_shifted_varying}, the region of attraction guaranteeing the exponential convergence encloses the following set,
\begin{align}\label{eqn:ROA}
&\mathcal{R}=\{(R(0),\Omega(0))\in\SO\times\Re^3\,|\, \|e_\Omega(0)\| <\nonumber \\
&\max\{
\sqrt{2k_R(2a-1+\cos\theta_0)},\,2\sqrt{ak_R(1-\epsilon)}-\frac{\gamma}{2}\theta_{b_0}\} \},
\end{align}
where $\theta_0$ is constructed from $R(0)$ by \refeqn{R0:U:thetab}.

Furthermore, $\mathcal{R}\subset\SO\times\Re^3$ covers $\SO$ completely, and when projected onto $\Re^3$, it is enlarged into $\Re^3$ as $k_R$ is increased in a semi-global sense.
\end{corollary}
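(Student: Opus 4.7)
The plan is to unpack the two branches of the control law \refeqn{tau_3} separately, read off the bound on $\|e_\Omega(0)\|$ that each one delivers, and then take the union; the $\max$ in \refeqn{ROA} is precisely this union. The two coverage claims at the end of the corollary then follow by inspecting the second argument of the $\max$ and letting $k_R\to\infty$.

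For the branch \refeqn{tau_3a}, the hypothesis of Theorem \ref{theorem:main} is $V_0(R(0),\Omega(0),0)\le 2ak_R$. Writing $R(0)R_d(0)^T = U_0 Z_{\theta_0} U_0^T$ from \refeqn{R0:U:thetab} and applying (\ref{R:isometry}) together with (\ref{Z:distance}), one has
\[
\|R(0)-R_d(0)\|^2 = \|Z_{\theta_0}-I_{3\times 3}\|^2 = 4(1-\cos\theta_0),
\]
so $V_0\le 2ak_R$ rearranges to $\|e_\Omega(0)\|\le \sqrt{2k_R(2a-1+\cos\theta_0)}$, which is the first term in the $\max$ (when the radicand is nonnegative). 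For the branch \refeqn{tau_3b}, I would apply Theorem \ref{theorem:main} to the shifted reference $(\tilde R_d,\tilde\Omega_d)$, whose admissibility condition is \refeqn{ROA_2}. The attitude part is bounded by \refeqn{newly:added:theta}, which yields $\tfrac{k_R}{4}\|R(0)-\tilde R_d(0)\|^2\le 2\epsilon ak_R$, so that \refeqn{ROA_2} is implied by $\|\tilde e_\Omega(0)\|\le 2\sqrt{ak_R(1-\epsilon)}$. Passing from $\tilde e_\Omega(0)=e_\Omega(0)+\Omega_d(0)-\tilde\Omega_d(0)$ to $e_\Omega(0)$ by the triangle inequality and using \refeqn{Wdtilde:Wd} at $t=0$ gives the sufficient condition $\|e_\Omega(0)\|\le 2\sqrt{ak_R(1-\epsilon)}-\tfrac{\gamma}{2}\theta_{b_0}$, which is the second term of the $\max$.

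For the coverage claims, note that \refeqn{gamma} forces $2\sqrt{ak_R(1-\epsilon)}-\tfrac{\gamma}{2}\theta_{b_0}>0$, so the second argument of the $\max$ is strictly positive for every $R(0)\in\SO$; this handles the critical case $\theta_0=\pi$, where the first argument is imaginary and plays no role. Consequently, picking $\Omega(0)=\Omega_d(0)$ places $(R(0),\Omega(0))$ in $\mathcal{R}$, so the projection of $\mathcal{R}$ onto $\SO$ is all of $\SO$. For the semi-global statement in $\Re^3$, I would observe that $\theta_{b_0}\in(0,\pi)$ remains bounded while $2\sqrt{ak_R(1-\epsilon)}\to\infty$ as $k_R\to\infty$; hence for any prescribed $\Omega(0)$ the second argument of the $\max$ eventually exceeds $\|e_\Omega(0)\|$, giving the asserted semi-global coverage.

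The proof is essentially algebraic, so no real analytical obstacle arises. The only subtlety worth flagging is that $\theta_{b_0}$ is not a free parameter: by \refeqn{newly:added:theta} it is determined by $\theta_0$ and hence by $R(0)$. I would therefore record that $\theta_{b_0}<\theta_0\le \pi$, so the penalty $-\tfrac{\gamma}{2}\theta_{b_0}$ is uniformly bounded in $R(0)$, which is what makes the $k_R\to\infty$ limit genuinely semi-global rather than merely pointwise in $R(0)$.
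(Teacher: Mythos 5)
Your proposal is correct and follows essentially the same route as the paper: the paper also splits by the two branches of \refeqn{tau_3}, rearranges $V_0(0)\leq 2ak_R$ into the first term of the $\max$, and obtains the second term by combining \refeqn{newly:added:theta}, the triangle inequality with \refeqn{Wdtilde:Wd} at $t=0$, and the positivity guaranteed by \refeqn{gamma}, before letting $k_R\to\infty$ for the semi-global claim. The only cosmetic difference is that the paper phrases the branch bookkeeping set-theoretically, writing the region of attraction as $\mathcal{R}_1\cup(\mathcal{R}_2\cap\mathcal{R}_3)=\mathcal{R}_1\cup\mathcal{R}_3$ so that a point satisfying the second bound while the first branch is active is explicitly covered by $\mathcal{R}_1$, which your union argument handles implicitly since that case is exactly $V_0(0)\leq 2ak_R$.
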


\begin{proof}
Define three subsets of $\SO\times\Re^3$ for the initial condition as
\begin{align*}
\mathcal{R}_1&=\braces{k_R(1-\cos\theta_0)+\frac{1}{2}\|e_\Omega(0)\|^2\leq 2ak_R },\\
\mathcal{R}_2&=\braces{k_R(1-\cos\theta_0)+\frac{1}{2}\|e_\Omega(0)\|^2> 2ak_R },\\
\mathcal{R}_3&=\braces{k_R(1-\cos(\theta_0-\theta_{b_0}))+\frac{1}{2}\|\tilde e_\Omega(0)\|^2\leq 2ak_R }.
\end{align*}
The sets $\mathcal{R}_1$ and $\mathcal{R}_2$ represent the initial conditions corresponding to the two cases of the control inputs, namely \refeqn{tau_3a} and \refeqn{tau_3b}, respectively. The set $\mathcal{R}_3$ represents the set of initial conditions \refeqn{ROA_2}, guaranteeing the exponential convergence for the second case of the control input. Therefore, the combined region of attraction $\bar{\mathcal{R}}$ guaranteeing exponential stability is given by $\bar{\mathcal{R}}\equiv\mathcal{R}_1\cup(\mathcal{R}_2\cap\mathcal{R}_3)$. Since $\mathcal{R}_1\cup\mathcal{R}_2=\SO\times\Re^3$, this reduces to $\bar{\mathcal{R}}=(\mathcal{R}_1\cup\mathcal{R}_2)\cap
(\mathcal{R}_1\cup\mathcal{R}_3)=\mathcal{R}_1\cup\mathcal{R}_3$.

Now, we show the set $\mathcal{R}$ defined by \refeqn{ROA} is contained in $\bar{\mathcal{R}}$, i.e., $\mathcal{R}\subset\bar{\mathcal{R}}$. For any $(R(0),\Omega(0))\in\mathcal{R}$,
\[
\|e_\Omega(0)\| < \sqrt{2k_R(2a-1+\cos\theta_0)},
\]
or
\[
\|e_\Omega(0)\| < 2\sqrt{ak_R(1-\epsilon)}-\frac{\gamma}{2}\theta_{b_0},
\]
where the right-hand side is positive due to \refeqn{gamma}. For the former case, it is straightforward to show $(R(0),\Omega(0))\in\mathcal{R}_1\subset\bar{\mathcal{R}}$. For the latter case, from \refeqn{tilde:eW:bound},
\[
\|\tilde e_\Omega(0)\| \leq 2\sqrt{ak_R(1-\epsilon)}.
\]
Therefore,
\begin{align*}
k_R& (1-\cos(\theta_0-\theta_{b_0}))+\frac{1}{2}\|\tilde e_\Omega(0)\|^2 \\
&\leq
k_R(1-\cos(\theta_0-\theta_{b_0})) + 2a(1-\epsilon)k_R \leq 2ak_R,
\end{align*}
where the last inequality is obtained by \refeqn{newly:added:theta}. This follows $(R(0),e_\Omega(0))\in\mathcal{R}_3\subset\bar{\mathcal{R}}$. In short, any initial condition in $\mathcal{R}$ belongs to the estimated region of attraction $\bar{\mathcal{R}}=\mathcal{R}_1\cup \mathcal{R}_3$ for the controlled dynamics, and the corresponding trajectory converges to zero exponentially according to \refeqn{exp0} or \refeqn{exp2}.

Next, for any $\theta_0\in[0,\pi]$, the set $\mathcal{R}$ is non-empty due to \refeqn{gamma}. As such, $\mathcal{R}$ contains $\SO$ via (\ref{C:union}). At \refeqn{ROA}, the upper bound of $\|e_\Omega(0)\|$ tends to be infinite, as $k_R\rightarrow\infty$. Therefore, $\mathcal{R}$ covers $\Re^3$ in a semi-global sense, as $k_R\rightarrow\infty$.
\end{proof}

The exceptional  property of the proposed control system is that the region of attraction covers the special orthogonal group completely, but the control input formulated in \refeqn{tau_3} is continuous in the time, i.e., there is no switching through the controlled attitude dynamics. The essential idea is that when the initial errors are large, the desired attitude is altered to an attitude that is closer to the initial attitude along the same conjugacy class, which is gradually varied back to the true desired attitude.

Taking the advantage of the global attractivity in attitude controls with non-switching controls has been unprecedented, and it has been considered largely impossible to achieve that. As discussed above, it has been uncertain if a more general class of time-varying feedback could accomplish the global stabilization task without introducing switching~\cite{MayTeeA13}. The proposed control system overcomes the topological restriction with  the discontinuity of the control input with respect to the initial condition. The proposed framework of modifying the desired trajectory to achieve global attractivity has been unprecedented, and it is readily generalized to abstract Lie groups and homogeneous manifolds.

\section{Adaptive Attitude Tracking Controls}\label{sec:AATC}

In this section, a constant disturbance moment $\Delta$ that has been introduced in \refeqn{Omega:dot} is considered. \EditTL{Throughout this section, it is assumed that the reference trajectory is such that both $\Omega_d(t)$ and $\dot \Omega_d(t)$ are bounded.} The organization of this section is parallel to the preceding section: two types of attitude tracking strategies are presented with an adaptive law to eliminate the effects of the disturbance.

\subsection{Almost Global Adaptive Tracking Strategy}

The overall controller structure and the definition of the error variables and parameters are identical to those defined in Section \ref{sec:AGT}. The adaptive control law presented in this section includes an estimate of the disturbance, denoted by $\bar\Delta\in\Re^3$ in the control torque as
\begin{equation}\label{eqn:tracking:control:delta}
\tau =  - (\MI \Omega)\times \Omega + \MI ( -k_R e_R - k_\Omega e_\Omega + \Omega \times \Omega_d + \dot \Omega_d)-\bar\Delta,
\end{equation}
where $\bar\Delta$ is updated according to
\begin{equation}\label{eqn:bar:delta:dot}
\dot{\bar \Delta} = k_\Delta \MI^{-1} (e_\Omega + \mu e_R)
\end{equation}
for $k_\Delta > 0$ with the initial estimate $\bar\Delta(0)=0$.

Let the estimation error be
\[
e_\Delta = \Delta -\bar \Delta\in\Re^3.
\]
From Assumption \ref{assump:delta}, we have the bound for the initial estimation error as $\|e_\Delta(0)\|\leq \delta$.

Define a Lyapunov function, augmented with an additional term for the estimation error $e_\Delta$ as
\begin{equation}\label{eqn:V:bar}
\bar V(R(t),\Omega(t),\bar\Delta(t),t) = V(R(t),\Omega(t),t) + \frac{1}{2k_\Delta} \| e_\Delta(t)\|^2.
\end{equation}
Along the trajectory of the controlled system with \refeqn{tracking:control:delta},
\begin{align*}
\dot{\bar V}(t) & =  - k_\Omega \|e_\Omega\|^2 -\mu k_R\|e_R\|^2 - \mu k_\Omega \langle e_R, e_\Omega \rangle \nonumber \\
&\,  + \mu \langle C(R^TR_d)e_\Omega, e_\Omega\rangle
+\langle e_\Delta, \MI^{-1} (e_\Omega+\mu e_R) -\frac{1}{k_\Delta} \dot{\bar \Delta}\rangle,
\end{align*}
where $\bar V(t)$ is a shorthand for $\bar V(R(t),\Omega(t),\bar\Delta(t),t)$. Substituting \refeqn{bar:delta:dot}, it reduces to
\begin{align}
\dot{\bar V}(t) & = - k_\Omega \|e_\Omega\|^2 -\mu k_R\|e_R\|^2 - \mu k_\Omega \langle e_R, e_\Omega \rangle \nonumber \\
&\,\, + \mu \langle C(R^TR_d)e_\Omega, e_\Omega\rangle.\label{eqn:Vbar_dot0}
\end{align}
Thus, the proposed control torque \refeqn{tracking:control:delta} and the adaptive law \refeqn{bar:delta:dot} ensure that the time-derivative of the augmented Lyapunov function is identical to (\ref{compute:Vdot}) that is developed for the ideal case when $\Delta=0$. The corresponding stability properties are summarized as follows.

\begin{theorem}\label{theorem:main:robust}
Consider the control torque defined in \refeqn{tracking:control:delta} with the adaptive law \refeqn{bar:delta:dot}. Choose positive constants $k_R$, $k_\Omega$, $k_\Delta$, $a$, and $\mu$ such that (\ref{ineq:a}), (\ref{ineq:epsilon}), and the following inequality are satisfied.
\begin{equation}\label{eqn:kdelta}
0 < 2a\frac{\sqrt{k_R}-\mu}{\sqrt{k_R}+\mu}k_R-\frac{1}{2k_\Delta}\delta^2.
\end{equation}
Then, the desired reference trajectory $(R(t),\Omega(t),\bar\Delta(t))=(R_d(t),\Omega_d(t),\Delta)$ is asymptotically stable. Furthermore, for any initial state $(R(0), \Omega(0)) \in \SO \times \mathbb R^3$ satisfying
\begin{align}\label{eqn:ineq:var:V0}
V_0(R(0),\Omega(0),0)\leq 2a\frac{\sqrt{k_R}-\mu}{\sqrt{k_R}+\mu}k_R-\frac{1}{2k_\Delta}\delta^2,
\end{align}
all of the attitude tracking error $\|R(t)-R_d(t)\|$, the angular velocity tracking error $\|\Omega(t)-\Omega_d(t)\|$ and the estimation error $\|\Delta-\bar\Delta(t)\|$ converge to zero as $t$ tends to infinity.
\end{theorem}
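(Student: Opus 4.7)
The plan is to import the Lyapunov machinery of Theorem \ref{theorem:main} essentially unchanged by showing that the augmented function $\bar V$ in \refeqn{V:bar} absorbs the disturbance, and then to upgrade the usual LaSalle/Barbalat argument in two stages so that the estimation error also converges.

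First, I would point out that the derivative \refeqn{Vbar_dot0} has exactly the same form as (\ref{compute:Vdot}), so the bound $\dot{\bar V}\leq -z^{T}W_3 z$ with $z=(\|E_R\|,\|e_\Omega\|)$ and $W_3$ from \refeqn{def:W3} follows verbatim from Lemma \ref{lemma:Vdot:less}, provided the trajectory satisfies $\|R(t)-R_d(t)\|\leq 2\sqrt{2a}$. The hypothesis \refeqn{ineq:epsilon} still gives $W_3\succ 0$, and \refeqn{kdelta} ensures the set described by \refeqn{ineq:var:V0} is non-empty.

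The crucial step is establishing forward invariance of the region $\{V_0\leq 2ak_R\}$. Using $\|e_\Delta(0)\|\leq\delta$ from Assumption \ref{assump:delta}, the upper bound in \refeqn{V0:less:V}, and the hypothesis \refeqn{ineq:var:V0}, a short computation yields
\[
\bar V(0)\;\leq\;\frac{\sqrt{k_R}+\mu}{\sqrt{k_R}}V_0(0)+\frac{\delta^{2}}{2k_\Delta}\;<\;2ak_R\,\frac{\sqrt{k_R}-\mu}{\sqrt{k_R}}.
\]
As long as we are in the region where $W_3\succ 0$, we have $\dot{\bar V}\leq 0$, and combining $V(t)\leq \bar V(t)\leq \bar V(0)$ with the lower bound $V\geq \frac{\sqrt{k_R}-\mu}{\sqrt{k_R}}V_0$ from \refeqn{V0:less:V} gives $V_0(t)<2ak_R$ strictly, so $\|R(t)-R_d(t)\|<2\sqrt{2a}$ is preserved for all $t\geq 0$. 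A standard continuity/bootstrap argument makes this invariance rigorous.

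With the invariance in hand, $\dot{\bar V}\leq -z^{T}W_3 z$ holds for all $t\geq 0$, and integration produces $z\in L^{2}$ while $\bar V$ bounded gives boundedness of $e_R$, $e_\Omega$ and $e_\Delta$. Using the attitude kinematics and the assumption that $\Omega_d$, $\dot\Omega_d$ are bounded, one checks that $\dot E_R$ and $\dot e_\Omega$ are bounded, hence $z$ is uniformly continuous, and Barbalat's lemma gives $\|E_R(t)\|,\|e_\Omega(t)\|\to 0$. The main obstacle is upgrading this to $e_\Delta\to 0$, since the adaptive law alone does not force it in the absence of persistency of excitation. To handle this, I would substitute \refeqn{tracking:control:delta} into \refeqn{Omega:dot} to obtain
\[
\dot e_\Omega = -k_R e_R - k_\Omega e_\Omega + e_\Omega\times\Omega_d + \MI^{-1} e_\Delta,
\]
differentiate once more to see that $\ddot e_\Omega$ is bounded (using boundedness of all errors, of $\Omega_d,\dot\Omega_d$, and of $\dot{\bar\Delta}$ from \refeqn{bar:delta:dot}), so $\dot e_\Omega$ is uniformly continuous. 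A second application of Barbalat's lemma to $e_\Omega\to 0$ yields $\dot e_\Omega\to 0$, and since $e_R, e_\Omega\to 0$, the displayed equation forces $\MI^{-1}e_\Delta(t)\to 0$, i.e., $\bar\Delta(t)\to\Delta$.
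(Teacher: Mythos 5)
Your proposal is correct and follows essentially the same route as the paper: the augmented function $\bar V$ from \refeqn{V:bar}, invariance of the region where $\|R-R_d\|\le 2\sqrt{2a}$ so that the $-z^TW_3z$ bound of Lemma \ref{lemma:Vdot:less} applies, boundedness plus Barbalat (the paper invokes the LaSalle--Yoshizawa theorem) to get $\|E_R\|,\|e_\Omega\|\to 0$, and a second Barbalat argument on the closed-loop equation for $\dot e_\Omega$ to conclude $\bar\Delta\to\Delta$. The only difference is your invariance step: the paper dispenses with the continuity/bootstrap argument by observing that \refeqn{Vbar_dot0}, bounded in terms of $\|e_R\|$ using $\|C(R^TR_d)\|_2\le 1$, is already negative semidefinite under (\ref{ineq:epsilon}) without any restriction on $\|R-R_d\|$, so $\bar V(t)\le\bar V(0)\le 2a(k_R-\mu\sqrt{k_R})$ and hence $\|R(t)-R_d(t)\|\le 2\sqrt{2a}$ follow directly for all $t\ge 0$.
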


\begin{proof}
According to the proof of Theorem \ref{theorem:main}, the augmented Lyapunov function \refeqn{V:bar} satisfies
\begin{equation}\label{eqn:V4}
z^T W_1 z + \frac{1}{2k_\Delta}\|e_\Delta\|^2 \leq \bar V(t) \leq
z^T W_2 z + \frac{1}{2k_\Delta}\|e_\Delta\|^2,
\end{equation}
for all $t\geq 0$, where the matrices $W_1$ and $W_2$ defined in (\ref{def:W1}) are positive-definite as $\mu < \sqrt{k_R}$ from (\ref{ineq:a}) and (\ref{ineq:epsilon}).

Since $\|C(R^T R_d)\|_2\leq 1$, from \refeqn{Vbar_dot0},
\[
\dot{\bar V}(t) \leq - (k_\Omega-\mu) \|e_\Omega\|^2 -\mu k_R\|e_R\|^2 + \mu k_\Omega \|e_R\| \|e_\Omega\|,
\]
and it is negative-semidefinite, i.e., $\dot{\bar V} \leq 0$ as $\mu < \frac{4k_Rk_\Omega}{4k_R+k_\Omega^2}$ from (\ref{ineq:epsilon}). Hence $\bar V(t)$ is non-increasing.

The condition \refeqn{kdelta} ensures that the set of initial conditions satisfying \refeqn{ineq:var:V0} is a non-empty neighborhood of the desired reference trajectory. From (\ref{V0:less:V}) and the fact that $\|e_\Delta(0)\|\leq \delta$,
\begin{align*}
\bar V & (R(0),\Omega(0),\bar\Delta(0),0)\nonumber\\
& \leq \frac{\sqrt{k_R}+\mu}{\sqrt{k_R}} V_0(R(0),\Omega(0),0)+\frac{1}{2k_\Delta}\delta^2.
\end{align*}
Therefore, for any initial condition satisfying \refeqn{ineq:var:V0},
\begin{equation}
\bar V  (R(0),\Omega(0),\bar\Delta(0),0) \leq 2(k_R-\mu\sqrt{k_R})a. \label{eqn:ROA_3a}
\end{equation}
Since $\bar V(t)$ is a non-increasing function of time, using the lower bound of (\ref{V0:less:V}), the above inequality implies
\begin{align*}
& \|R(t)-R_d(t)\|^2 \leq \frac{4}{k_R}V_0(t)\leq \frac{4}{k_R -\mu\sqrt{k_R}} V(t)\nonumber\\
  &\leq \frac{4}{k_R -\mu\sqrt{k_R}} \bar V(t)
  \leq \frac{4}{k_R -\mu\sqrt{k_R}} \bar V(0) \leq 8a.
\end{align*}
Therefore, \refeqn{ER2a} is satisfied. As discussed above, the expression for $\dot{\bar V}(t)$ is identical to (\ref{compute:Vdot}), and therefore, we can apply Lemma \ref{lemma:Vdot:less} to obtain
\begin{equation}
 \dot{\bar V}(t) \leq - z^T W_3 z,\label{eqn:V4_dot}
 \end{equation}
for all $t\geq 0$ with $z=(\|E_R\|,\|e_\Omega\|)$, and $W_3$ is positive-definite due to (\ref{ineq:epsilon}).

In short, for any initial state $(R(0), \Omega(0)) \in \SO \times \mathbb R^3$ satisfying
\refeqn{ineq:var:V0}, the Lyapunov function is positive-definite and decrescent as \refeqn{V4}, and its time-derivative is negative-semidefinite as \refeqn{V4_dot}, for all $t\geq 0$. This implies that the reference trajectory is stable in the sense of Lyapunov, and all of the error variables,  namely $E_R(t)$, $e_\Omega(t)$, and $e_\Delta(t)$ are  bounded. Also, from the LaSalle-Yoshizawa theorem~\cite[Theorem A.8]{KrsKan95}, $z = (\| E_R \|, \| e_\Omega \|)\rightarrow 0$ as $t\rightarrow\infty$. Using the boundedness of the error variables, one can easily show that $\ddot e_\Omega$ is bounded as well. Then, according to Barbalat's lemma, $\dot e_\Omega \rightarrow 0$ as $t\rightarrow\infty$.  Substituting these into \refeqn{Omega:dot} and \refeqn{tracking:control:delta} guarantees $\bar\Delta\rightarrow \Delta$ as $t\rightarrow\infty$. Therefore, the zero equilibrium of the tracking errors and the estimation error is asymptotically stable.
\end{proof}

In contrast to Theorem \ref{theorem:main}, there is no guarantee of exponential convergence as the  given ultimate bound of $\dot{\bar V}$ does not depend on the estimation error $e_\Delta$. Alternatively, one could achieve exponential stability by redefining the error variables as shown in~\cite{LeeITAC15}, but we do not pursue it in this paper.

An alternative estimate of the region of attraction is given by the sub-level set of the Lyapunov function as \refeqn{ROA_3a}. The estimate provided by \refeqn{ineq:var:V0} in Theorem \ref{theorem:main:robust} is more conservative, as it is a subset of \refeqn{ROA_3a}. However, we use \refeqn{ineq:var:V0} as an estimate of the region of attraction in the subsequent development throughout this section for simplicity. As with Corollary \ref{corollary2:main}, we discuss how to choose  values of the control parameters $k_R$ and $k_\Omega$ for a given initial state.
\begin{corollary}\label{corollary2:main:robust}
Given an arbitrary initial state $(R(0), \Omega(0),\bar\Delta(0)) \in \SO \times \mathbb R^3\times \Re^3$ satisfying
\[
\| R(0) - R_d(0)\| < 2\sqrt{2},
\]
take any $k_R$, $k_\Delta$, $k_\Omega$, and $\mu$ such that
\begin{gather}\label{Cor2:cond42}
\| R(0) - R_d(0)\|^2 < 8\frac{\sqrt{k_R}-\mu}{\sqrt{k_R}+\mu},\\
\frac{2 \| \Omega (0) - \Omega_d(0)\|^2+\frac{2}{k_\Delta}\delta^2}{8\frac{\sqrt{k_R}-\mu}{\sqrt{k_R}+\mu} - \|R(0) - R_d(0)\|^2} < k_R
\end{gather}
with (\ref{ineq:epsilon}) and \refeqn{kdelta}.  Then, $V_0(R(0),\Omega(0),0)< 2a \frac{\sqrt{k_R}-\mu}{\sqrt{k_R}+\mu}-\frac{1}{2k_\Delta}\delta^2$, and the conclusions of Theorem \ref{theorem:main:robust} hold true for any $a$ satisfying
\[
\frac{V_0(R(0),\Omega(0),0)+\frac{1}{2k_\Delta}\delta^2}{2\frac{\sqrt{k_R}-\mu}{\sqrt{k_R}+\mu}} \leq a <1.
\]

 \begin{proof}
 Straightforward.
 \end{proof}
 \end{corollary}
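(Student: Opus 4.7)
The plan is to reduce the claimed statement to the hypothesis \refeqn{ineq:var:V0} of Theorem \ref{theorem:main:robust} by pure algebraic manipulation, exactly mirroring the proof of Corollary \ref{corollary2:main} but carrying the extra $\frac{1}{2k_\Delta}\delta^2$ term and the factor $\frac{\sqrt{k_R}-\mu}{\sqrt{k_R}+\mu}$ through the computation. No new dynamical ingredient is required; once the inequality on $V_0(R(0),\Omega(0),0)$ is established, Theorem \ref{theorem:main:robust} is applied verbatim.

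Concretely, I would proceed as follows. First, observe that \refeqn{Cor2:cond42} guarantees that the denominator $8\frac{\sqrt{k_R}-\mu}{\sqrt{k_R}+\mu}-\|R(0)-R_d(0)\|^2$ appearing in the hypothesis is strictly positive, so multiplying the given inequality on $k_R$ through by this denominator is legitimate and preserves the direction of the inequality. After clearing the denominator one obtains
\[
2\|\Omega(0)-\Omega_d(0)\|^2+\tfrac{2}{k_\Delta}\delta^2+k_R\|R(0)-R_d(0)\|^2 < 8k_R\tfrac{\sqrt{k_R}-\mu}{\sqrt{k_R}+\mu}.
\]
Dividing by $4$ and recognizing the left-hand side modulo the $\delta^2$ term as $V_0(R(0),\Omega(0),0)+\frac{1}{2k_\Delta}\delta^2$ via the definition \refeqn{def:V0}, this becomes
\[
V_0(R(0),\Omega(0),0) + \tfrac{1}{2k_\Delta}\delta^2 < 2k_R\tfrac{\sqrt{k_R}-\mu}{\sqrt{k_R}+\mu},
\]
which is the asserted bound at the limiting value $a=1$. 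Since the inequality is strict, the ratio $\frac{V_0(R(0),\Omega(0),0)+\frac{1}{2k_\Delta}\delta^2}{2k_R\frac{\sqrt{k_R}-\mu}{\sqrt{k_R}+\mu}}$ is strictly less than $1$, so any $a$ in the half-open interval specified in the corollary is admissible and nonempty.

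Once such an $a$ is fixed, the hypothesis \refeqn{ineq:var:V0} of Theorem \ref{theorem:main:robust} is satisfied by construction, and the hypotheses on $\mu$, $k_\Delta$ are satisfied by assumption, so the conclusions of Theorem \ref{theorem:main:robust} apply directly. I anticipate no serious obstacle: the only subtlety is being careful about strict versus non-strict inequalities (so that one can indeed select $a<1$) and keeping track of the $\frac{\sqrt{k_R}-\mu}{\sqrt{k_R}+\mu}$ factor that replaces the plain $1$ appearing in the disturbance-free Corollary \ref{corollary2:main}. This factor enters because the region-of-attraction estimate in the adaptive case, \refeqn{ineq:var:V0}, is inherited from the lower bound of $V$ in terms of $V_0$ supplied by \refeqn{V0:less:V}, rather than from $V_0$ alone.
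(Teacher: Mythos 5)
Your proposal is correct and follows exactly the route the paper intends, supplying the algebra it leaves as ``straightforward'': clearing the positive denominator in the gain condition, dividing by four to recognize $V_0(R(0),\Omega(0),0)+\tfrac{1}{2k_\Delta}\delta^2 < 2k_R\tfrac{\sqrt{k_R}-\mu}{\sqrt{k_R}+\mu}$, and then choosing $a$ in the stated interval so that the hypothesis \refeqn{ineq:var:V0} of Theorem \ref{theorem:main:robust} holds. Your version even carries the $k_R$ factor correctly (the corollary's displayed bound and the denominator of the $a$-interval omit it, evidently a typographical slip), so nothing further is needed.
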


This corollary implies that the region of attraction for asymptotic convergence almost covers $\SO$ as discussed in Section \ref{sec:AGT}, and the region of attraction for $\Omega$ increases in a semi-global sense by increasing $k_R$.

While these results are developed for the constant disturbance, it can be readily generalized to any other disturbance models where the unknown parameter appears linearly, such as $\phi(R,\Omega,t)\Delta$ with a known function $\phi(R,\Omega,t)$, as commonly studied in the literature of adaptive controls with a weaker convergence property that $\phi(R,\Omega,t)e_\Delta$ asymptotically converges to zero~\cite{AstWit08,IoaSun95}. The  result presented in this paper may be considered as a special case when $\phi(R,\Omega,t)=I_{3\times 3}$.

\subsection{Adaptive Global Tracking Strategy}\label{sec:RSGT}

As in Section \ref{sec:SGT}, we construct an adaptive control law that is continuous in time, while guaranteeing global attractivity. Consider the adaptive control system presented in Theorem \ref{theorem:main:robust}. If a given initial condition satisfies \refeqn{ineq:var:V0}, the error variables asymptotically converge to zero. Therefore, we focus on the case where
\begin{align}\label{eqn:ineq:var:V0:a}
V_0(R(0),\Omega(0),0)> 2a\frac{\sqrt{k_R}-\mu}{\sqrt{k_R}+\mu}k_R-\frac{1}{2k_\Delta}\delta^2.
\end{align}

Define $\theta_0\in[0,\pi]$ and $U_0$ as in \refeqn{R0:U:thetab}. For a constant $\epsilon\in(0,1)$, choose an angle $\theta_{b_0}\in(0,\theta_0)$ such that
\begin{equation}\label{eqn:theta_b0:4}
1-\cos(\theta_0-\theta_{b_0}) \leq
\parenth{2a\frac{\sqrt{k_R}-\mu}{\sqrt{k_R}+\mu}-\frac{1}{2k_\Delta k_R}\delta^2}\epsilon.
\end{equation}
Then, define the time-varying angle $\theta_b(t)$ as \refeqn{theta:b:varying} with a positive constant $\gamma$ satisfying
\begin{equation}\label{eqn:gamma:4}
\gamma < \frac{2}{\theta_{b_0}} \sqrt{2(1-\epsilon)\parenth{2a\frac{\sqrt{k_R}-\mu}{\sqrt{k_R}+\mu}k_R-\frac{1}{2k_\Delta}\delta^2}}.
\end{equation}
The shifted desired attitude is defined as \refeqn{Rd:shifted:varying}. While the shifted desired attitude in this section is constructed with different bounds on $\theta_{b_0}$ and $\gamma$, it is straightforward to show that it satisfies all of the properties summarized in Lemma \ref{lem:Rdtilde}.

We use $(\tilde R_d(t),\tilde \Omega_d(t))$ as a reference trajectory for the construction of the control system. The error variables $\tilde E_R$, $\tilde e_R$, and $\tilde e_\Omega$ are defined using the shifted reference trajectories. The control torque and the adaptive law are defined as
\begin{align}
\tilde \tau & =  - (\MI \Omega)\times \Omega \nonumber \\
&\quad+ \MI ( -k_R \tilde e_R - k_\Omega \tilde e_\Omega + \Omega \times \tilde\Omega_d + \dot{\tilde \Omega}_d)-\bar\Delta,\label{eqn:tracking:control:delta:tilde:varying}\\
\dot{\bar \Delta} & = k_\Delta \MI^{-1} (\tilde e_\Omega + \mu \tilde e_R),\label{eqn:bar:delta:dot:tilde:varying}
\end{align}
with $\bar\Delta(0)=0$.  According to Theorem \ref{theorem:main:robust}, for any initial condition satisfying
\begin{align}\label{eqn:ineq:var:V0:a:varying}
\frac{k_R}{4}\|R(0)-\tilde R_d(0)\|^2 &+ \frac{1}{2}\|\Omega(0)-\tilde \Omega_d(0)\|^2\nonumber\\
& \leq 2a\frac{\sqrt{k_R}-\mu}{\sqrt{k_R}+\mu}k_R-\frac{1}{2k_\Delta}\delta^2,
\end{align}
the tracking errors for the shifted desired trajectory, namely $\|R(t)-\tilde R_d(t)\|$, $\|\Omega(t)-\tilde\Omega_d(t)\|$, and $\|\Delta-\bar\Delta(t)\|$ asymptotically converge to zero. As the shifted reference trajectories are designed such that $\|R_d(t)-\tilde R_d(t)\|$ and $\|\Omega_d(t)-\tilde\Omega_d(t)\|$ exponentially converge to zero, these imply that all of the tracking errors $\|R(t)- R_d(t)\|$, $\|\Omega(t)-\Omega_d(t)\|$ from the original reference trajectories and the estimation error $\|\Delta-\bar\Delta(t)\|$ asymptotically converge to zero as $t\rightarrow \infty$. These are summarized as follows.

\begin{theorem}\label{thm:main_shifted:robust:varying}
Choose any positive numbers $k_R$, $k_\Omega$, $k_\Delta$, $a$, $\mu$, $\epsilon$, $\theta_{b_0}$, and $\gamma$ such that (\ref{ineq:a}), (\ref{ineq:epsilon}), \refeqn{kdelta}, \refeqn{theta_b0:4}, and \refeqn{gamma:4} are satisfied. The control input and the adaptive law are defined as follows.
\begin{subnumcases}
\; \tau =  - (\MI \Omega)\times \Omega + \MI ( - k_R  e_R -  k_\Omega e_\Omega + \Omega \times \Omega_d + \dot \Omega_d),\nonumber\\
\dot{\bar \Delta }= k_\Delta \MI^{-1}(e_\Omega+\mu  e_R),\nonumber\\
\text{when } V_0(R(0),\Omega(0),0)\leq 2a\tfrac{\sqrt{k_R}-\mu}{\sqrt{k_R}+\mu}k_R-\tfrac{1}{2k_\Delta}\delta^2,\label{eqn:tau_4a}\\
\; \tau =  - (\MI \Omega)\times \Omega + \MI ( - k_R \tilde e_R -  k_\Omega \tilde e_\Omega + \Omega \times \Omega_d + \dot \Omega_d),\nonumber\\
\dot{\bar \Delta }= k_\Delta \MI^{-1}(\tilde e_\Omega+\mu \tilde e_R),\nonumber\\
\text{when } V_0(R(0),\Omega(0),0)> 2a\tfrac{\sqrt{k_R}-\mu}{\sqrt{k_R}+\mu}k_R-\tfrac{1}{2k_\Delta}\delta^2,
\label{eqn:tau_4b}
\end{subnumcases}
with $\bar\Delta(0)=0$ for both cases. Then, the zero equilibrium of the tracking errors $(E_R,e_\Omega,e_\Delta)=(0,0,0)$ is asymptotically stable.
\end{theorem}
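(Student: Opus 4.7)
The plan is to mirror the proof of Theorem \ref{thm:main_shifted_varying} step by step, but invoking the adaptive result of Theorem \ref{theorem:main:robust} in place of the disturbance-free Theorem \ref{theorem:main}. Because the branching in \refeqn{tau_4a}--\refeqn{tau_4b} is decided once and for all by the initial condition, no switching actually occurs along any closed-loop trajectory, and the two branches can be analyzed in isolation.

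First, when the initial condition lies in the region of attraction \refeqn{ineq:var:V0} of the non-shifted adaptive controller, branch \refeqn{tau_4a} is identical to the control and adaptive law of Theorem \ref{theorem:main:robust} driven by the true reference $(R_d,\Omega_d)$. I would simply invoke that theorem to obtain $E_R(t)\to 0$, $e_\Omega(t)\to 0$ and $e_\Delta(t)\to 0$ as $t\to\infty$.

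Second, for branch \refeqn{tau_4b} I would apply Theorem \ref{theorem:main:robust} to the shifted reference $(\tilde R_d,\tilde\Omega_d)$ defined by \refeqn{Rd:shifted:varying}. The crucial step is to verify that the shifted initial condition satisfies the region-of-attraction inequality \refeqn{ineq:var:V0:a:varying}. By Lemma \ref{lem:Rdtilde}(i) together with the choice \refeqn{theta_b0:4} of $\theta_{b_0}$, the attitude contribution $\tfrac{k_R}{4}\|R(0)-\tilde R_d(0)\|^2$ is bounded by $\epsilon\bigl(2a\tfrac{\sqrt{k_R}-\mu}{\sqrt{k_R}+\mu}k_R-\tfrac{1}{2k_\Delta}\delta^2\bigr)$; by Lemma \ref{lem:Rdtilde}(iv) together with \refeqn{gamma:4}, the angular-velocity contribution $\tfrac{1}{2}\|\tilde e_\Omega(0)\|^2$ is bounded by $(1-\epsilon)\bigl(2a\tfrac{\sqrt{k_R}-\mu}{\sqrt{k_R}+\mu}k_R-\tfrac{1}{2k_\Delta}\delta^2\bigr)$, provided $\|e_\Omega(0)\|$ lies in the semi-global range identified in Corollary \ref{cor:ROA_GT}. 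Summing the two bounds places the shifted initial state inside \refeqn{ineq:var:V0:a:varying}, so Theorem \ref{theorem:main:robust} yields $\|R(t)-\tilde R_d(t)\|$, $\|\Omega(t)-\tilde\Omega_d(t)\|$ and $\|\Delta-\bar\Delta(t)\|$ all converging to zero. Parts (ii) and (iv) of Lemma \ref{lem:Rdtilde} provide exponential decay of $\tilde R_d-R_d$ and $\tilde\Omega_d-\Omega_d$ at rate $\gamma/2$, so a triangle inequality then transfers asymptotic convergence to the original errors $E_R$, $e_\Omega$, and $e_\Delta$.

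The main obstacle is the region-of-attraction verification for the shifted problem: one has to read \refeqn{theta_b0:4} and \refeqn{gamma:4} as cleanly partitioning the available margin $2a\tfrac{\sqrt{k_R}-\mu}{\sqrt{k_R}+\mu}k_R-\tfrac{1}{2k_\Delta}\delta^2$ into an attitude share (weighted by $\epsilon$) and an angular-velocity share (weighted by $1-\epsilon$) that together fit inside \refeqn{ineq:var:V0:a:varying}. A secondary technical point is that the Barbalat-lemma argument underlying $\bar\Delta\to\Delta$ in Theorem \ref{theorem:main:robust} was written for the unshifted dynamics; extending it to the shifted case needs boundedness of $\tilde\Omega_d$ and $\dot{\tilde\Omega}_d$, which follows from the boundedness of $\Omega_d$ and $\dot\Omega_d$ explicitly assumed at the start of Section \ref{sec:AATC}, combined with \refeqn{Omega:tilde:varying} and the exponential decay of $\dot\theta_b$.
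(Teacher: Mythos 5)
Your proposal matches the paper's own argument: the paper likewise analyzes the two branches separately, invokes Theorem \ref{theorem:main:robust} with the shifted reference $(\tilde R_d,\tilde\Omega_d)$ for the second branch, and transfers asymptotic convergence of $E_R$, $e_\Omega$, $e_\Delta$ to zero via the exponential decay of $\tilde R_d-R_d$ and $\tilde\Omega_d-\Omega_d$ from Lemma \ref{lem:Rdtilde}; the bookkeeping showing that \refeqn{theta_b0:4} and \refeqn{gamma:4} split the margin $2a\tfrac{\sqrt{k_R}-\mu}{\sqrt{k_R}+\mu}k_R-\tfrac{1}{2k_\Delta}\delta^2$ so that \refeqn{ineq:var:V0:a:varying} holds is exactly what the paper records in the corollary immediately following the theorem (your pointer to Corollary \ref{cor:ROA_GT} should refer to that adaptive corollary instead). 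Your added remark that the Barbalat step requires boundedness of $\tilde\Omega_d$ and $\dot{\tilde\Omega}_d$, obtained from the assumed boundedness of $\Omega_d$, $\dot\Omega_d$ and the exponential decay of $\theta_b$, is correct and slightly more careful than the paper, which leaves this implicit.
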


The corresponding region of attraction is characterized in the following corollary.

\begin{corollary}
For the control system presented in Theorem \ref{thm:main_shifted:robust:varying}, the region of attraction guaranteeing asymptotic convergence encloses the following set,
\begin{align}\label{eqn:ROA:4}
&\mathcal{R}=\{(R(0),\Omega(0))\in\SO\times\Re^3\,|\, \|e_\Omega(0)\| <\nonumber \\
&\max\{
\sqrt{B-k_R(1-\cos\theta_0)},\,\sqrt{2(1-\epsilon)B}-\frac{\gamma}{2}\theta_{b_0}\} \},
\end{align}
where $\theta_0$ is constructed from $R(0)$ by \refeqn{R0:U:thetab}, and the positive constant $B$ is defined as
\begin{equation}
B = 2a\frac{\sqrt{k_R}-\mu}{\sqrt{k_R}+\mu}k_R -\frac{1}{2k_\Delta}\delta^2.
\end{equation}

Furthermore, $\mathcal{R}\subset\SO\times\Re^3$ covers $\SO$ completely, and when projected onto $\Re^3$, it is enlarged into $\Re^3$ as $k_R$ is increased in a semi-global sense.
\end{corollary}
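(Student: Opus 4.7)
The plan is to mirror the proof of Corollary~\ref{cor:ROA_GT} almost verbatim. The only substantive change is that the threshold $2ak_R$ of the disturbance-free case is replaced by
\[
B = 2a\frac{\sqrt{k_R}-\mu}{\sqrt{k_R}+\mu}k_R - \frac{1}{2k_\Delta}\delta^2,
\]
which is the bound on $V_0(R(0),\Omega(0),0)$ imposed by the adaptive Theorem~\ref{theorem:main:robust}. First, I would partition $\SO\times\Re^3$ according to the two switching cases of~\refeqn{tau_4a}--\refeqn{tau_4b} and introduce an analog of $\mathcal{R}_3$:
\begin{align*}
\mathcal{R}_1 &= \{k_R(1-\cos\theta_0) + \tfrac{1}{2}\|e_\Omega(0)\|^2 \leq B\},\\
\mathcal{R}_2 &= \{k_R(1-\cos\theta_0) + \tfrac{1}{2}\|e_\Omega(0)\|^2 > B\},\\
\mathcal{R}_3 &= \{k_R(1-\cos(\theta_0-\theta_{b_0})) + \tfrac{1}{2}\|\tilde e_\Omega(0)\|^2 \leq B\}.
\end{align*}
Since $\mathcal{R}_1\cup\mathcal{R}_2 = \SO\times\Re^3$, the same Boolean identity used in Corollary~\ref{cor:ROA_GT} identifies the combined estimated region of attraction as $\bar{\mathcal{R}} = \mathcal{R}_1 \cup \mathcal{R}_3$.

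Next, I would establish $\mathcal{R}\subset\bar{\mathcal{R}}$ by case analysis on which argument of the max in~\refeqn{ROA:4} dominates. If $\|e_\Omega(0)\| < \sqrt{B - k_R(1-\cos\theta_0)}$, directly $(R(0),\Omega(0)) \in \mathcal{R}_1$. Otherwise $\|e_\Omega(0)\| < \sqrt{2(1-\epsilon)B} - \tfrac{\gamma}{2}\theta_{b_0}$, whose right-hand side is strictly positive by the choice of $\gamma$ in~\refeqn{gamma:4}. Using the analog of~\refeqn{tilde:eW:bound} (which remains valid because Lemma~\ref{lem:Rdtilde} still applies to the shifted trajectory in this section), one obtains $\|\tilde e_\Omega(0)\| \leq \sqrt{2(1-\epsilon)B}$, hence $\tfrac{1}{2}\|\tilde e_\Omega(0)\|^2 \leq (1-\epsilon)B$. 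Combining this with~\refeqn{theta_b0:4}, which in view of the definition of $B$ amounts to $k_R(1-\cos(\theta_0-\theta_{b_0})) \leq \epsilon B$, places $(R(0),\Omega(0))$ in $\mathcal{R}_3$.

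For the coverage claims, I would note that for every $\theta_0 \in [0,\pi]$ at least one argument of the max in~\refeqn{ROA:4} is strictly positive: specifically, $\sqrt{2(1-\epsilon)B} - \tfrac{\gamma}{2}\theta_{b_0} > 0$ by~\refeqn{gamma:4}. Hence $\mathcal{R}$ meets every conjugacy class, and combining with~(\ref{C:union}) shows that its projection onto $\SO$ is the whole group. As $k_R \to \infty$, $B$ grows like $k_R$, so $\sqrt{2(1-\epsilon)B}$ grows like $\sqrt{k_R}$ and dominates the fixed offset $\tfrac{\gamma}{2}\theta_{b_0}$; the admissible range of $\|e_\Omega(0)\|$ therefore tends to infinity, giving semi-global coverage in $\Re^3$. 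The only real obstacle is careful bookkeeping: one must verify that the two design inequalities~\refeqn{theta_b0:4} and~\refeqn{gamma:4} have been tuned so that the $\epsilon$-fraction of $B$ absorbed by the conjugacy shift and the $(1-\epsilon)$-fraction of $B$ reserved for the angular-velocity excess fit simultaneously inside the single budget $B$; once that algebra is checked, the rest of the argument is a routine transcription of Corollary~\ref{cor:ROA_GT}.
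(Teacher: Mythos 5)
Your proposal matches the paper's proof essentially verbatim: the same decomposition into $\mathcal{R}_1,\mathcal{R}_2,\mathcal{R}_3$ with threshold $B$, the same Boolean reduction to $\bar{\mathcal{R}}=\mathcal{R}_1\cup\mathcal{R}_3$, the same case analysis using \refeqn{tilde:eW:bound} and the rescaled form of \refeqn{theta_b0:4} (i.e., $k_R(1-\cos(\theta_0-\theta_{b_0}))\leq\epsilon B$), and the same coverage argument via \refeqn{gamma:4} and the growth of $B$ with $k_R$. No gaps; the bookkeeping you flag is exactly the $\epsilon B + (1-\epsilon)B \leq B$ budget split the paper uses.
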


\begin{proof}
Define three subsets of $\SO\times\Re^3\times\Re^3$ for the initial condition as
\begin{align*}
\mathcal{R}_1&=\bigg\{k_R(1-\cos\theta_0)+\frac{1}{2}\|e_\Omega(0)\|^2\leq B \bigg\},\\
\mathcal{R}_2&=\bigg\{k_R(1-\cos\theta_0)+\frac{1}{2}\|e_\Omega(0)\|^2 > B \bigg\},\\
\mathcal{R}_3&=\bigg\{k_R(1-\cos(\theta_0-\theta_{b_0}))+\frac{1}{2}\|\tilde e_\Omega(0)\|^2\leq B \bigg\}.
\end{align*}
The sets $\mathcal{R}_1$ and $\mathcal{R}_2$ represent the initial conditions corresponding to the two cases of the control inputs, namely \refeqn{tau_4a} and \refeqn{tau_4b}, respectively. The set $\mathcal{R}_3$ represents the set of initial conditions \refeqn{ineq:var:V0:a:varying}, guaranteeing the asymptotic convergence for the second case of the control input. Therefore, the combined region of attraction $\bar{\mathcal{R}}$ is given by $\bar{\mathcal{R}}\equiv\mathcal{R}_1\cup(\mathcal{R}_2\cap\mathcal{R}_3)$. Since $\mathcal{R}_1\cup\mathcal{R}_2=\SO\times\Re^3$, this reduces to $\bar{\mathcal{R}}=(\mathcal{R}_1\cup\mathcal{R}_2)\cap
(\mathcal{R}_1\cup\mathcal{R}_3)=\mathcal{R}_1\cup\mathcal{R}_3$.

Next, we show that $\mathcal{R}\subset \bar{\mathcal{R}}$. For any $(R(0),\Omega(0))\in\mathcal{R}$,
\[
\|e_\Omega(0)\|\leq \sqrt{B-k_R(1-\cos\theta_0)},
\]
or
\[
\|e_\Omega(0)\|\leq\sqrt{2(1-\epsilon)B}-\frac{\gamma}{2}\theta_{b_0},
\]
where the right-hand side is positive due to \refeqn{gamma:4}. For the former case, $(R(0),\Omega(0))\in\mathcal{R}_1 \subset \bar{\mathcal{R}}$. For the latter case, from \refeqn{tilde:eW:bound}, $
\|\tilde e_\Omega(0)\|\leq \sqrt{2(1-\epsilon)B}$. Hence,
\begin{align*}
k_R & (1-\cos(\theta_0-\theta_{b_0}))+\frac{1}{2}\|\tilde e_\Omega(0)\|^2 \\
&\leq k_R (1-\cos(\theta_0-\theta_{b_0})) + (1-\epsilon)B \leq B,
\end{align*}
from \refeqn{theta_b0:4}. Therefore, $(R(0),\Omega(0))\in\mathcal{R}_3$. As such, any initial condition in $\mathcal{R}$ also belongs to the estimated region of attraction $\bar{\mathcal{R}}$, and the resulting controlled trajectory asymptotically converges to the desired one.

Also, for any $\theta_0\in[0,\pi]$, the set $\mathcal{R}$ is non-empty due to \refeqn{gamma:4}. At \refeqn{ROA:4}, the upper bound on $\|e_\Omega(0)\|$ tends to be infinite as $k_R\rightarrow\infty$.
\end{proof}

As in the previous section, the region attraction covers the entire $\SO$, and when projected to $\Re^3$ for the angular velocity, it is enlarged in a semi-global sense as $k_R$ increases. But, such global attractivity on $\SO$ is achieved with the control input and the adaptive law that are formulated as continuous functions of $t$. As discussed in Section \ref{sec:SGT}, overcoming the topological restriction on the attitude control with a non-switching control input has been unprecedented. The adaptive law presented in this section additionally allows a constant disturbance to be present in the dynamics  at the expense of sacrificing the exponential convergence.

\section{Numerical Examples}

\begin{figure}
\centerline{
	\subfigure[Tracking errors]{\includegraphics[width=0.8\columnwidth]{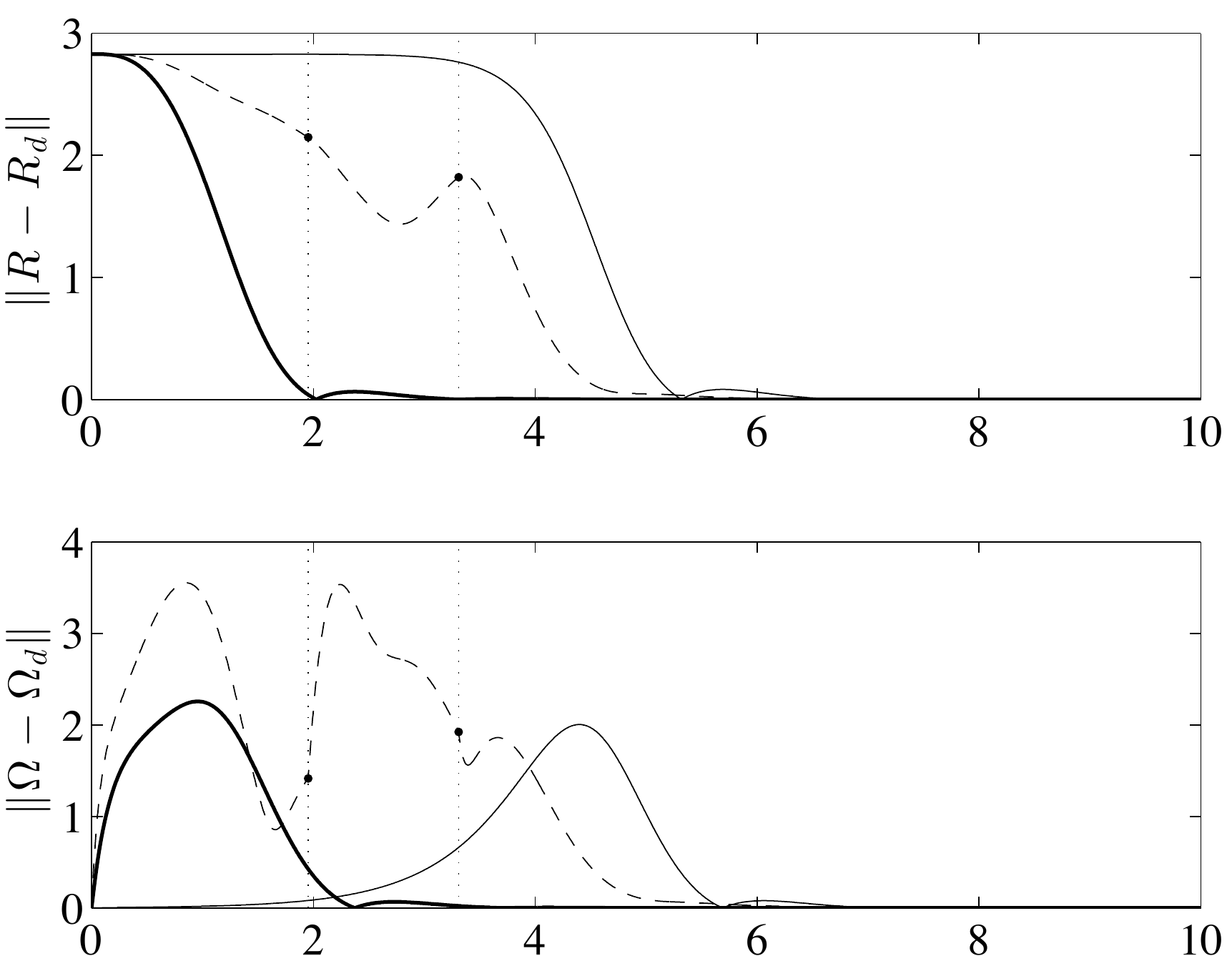}}
	}
\centerline{
	\subfigure[Control inputs]{\includegraphics[width=0.8\columnwidth]{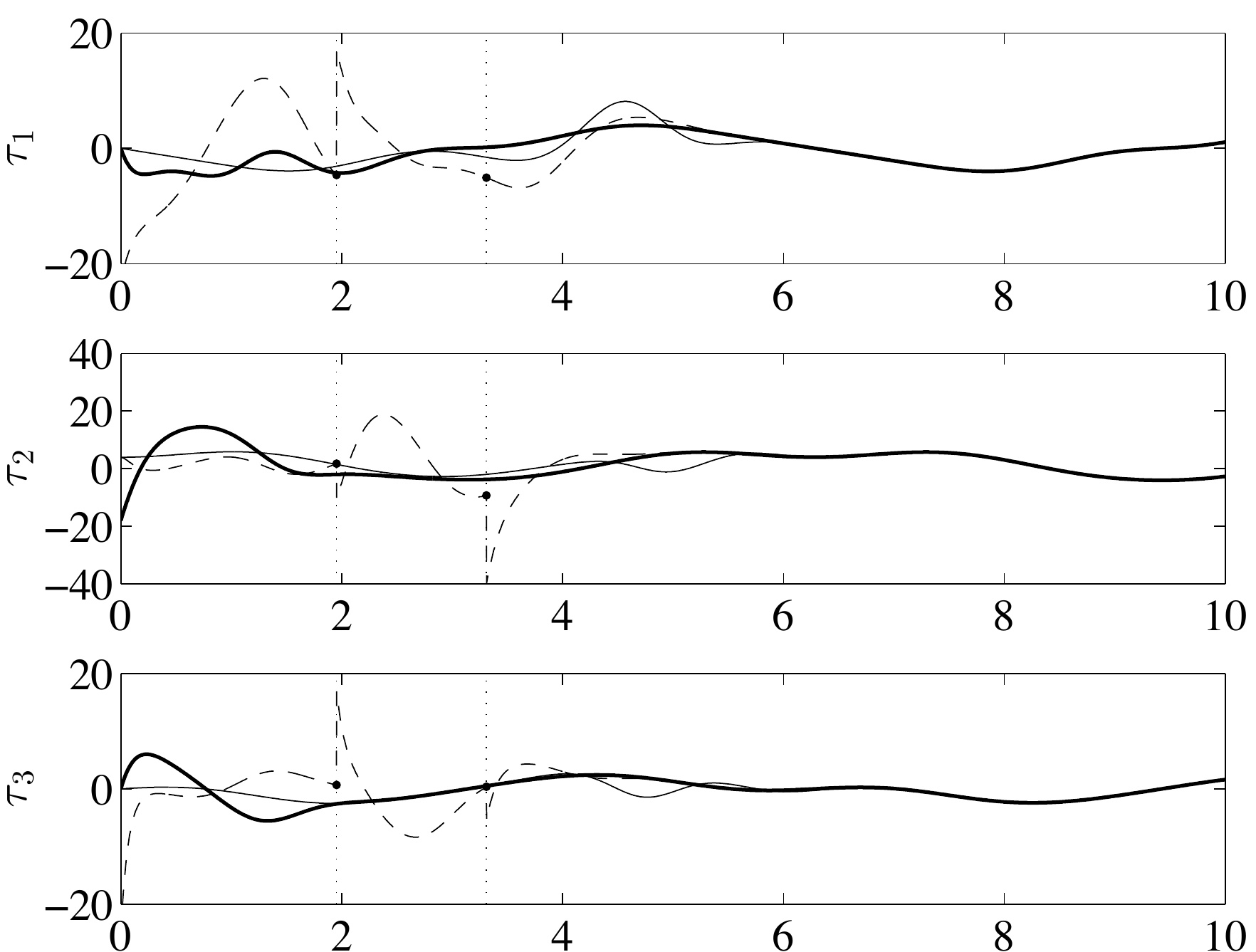}}
	}
\caption{Simulation results for two attitude tracking controls presented in Section \ref{sec:ATC} and a hybrid attitude control~\cite{LeeITAC15} (AGTS:solid, GTS:bold, HYB:dotted; the switching time for HYB is denoted by vertical dotted lines).}\label{fig:Sec3}
\end{figure}

\subsection{Attitude Tracking Controls}

We first show the numerical results for the attitude tracking controls presented in Section \ref{sec:ATC}. Throughout this section, two control strategies presented in Section \ref{sec:ATC} are denoted by AGTS (almost global tracking strategy), and GTS (global tracking strategy), respectively. They are also compared with the hybrid attitude control presented in~\cite{LeeITAC15} that guarantees global exponential stability with discontinuities of the control input with respect to $t$, and it is denoted by HYB.

Assume that the moment of inertia matrix of the system is
\[
\MI = \operatorname{diag}[3, 2, 1]\,\mathrm{kgm^2}.
\]
Consider the following reference trajectory:
\begin{align*}
&R_d(t) \\
&{\footnotesize = \begin{bmatrix}
\cos t & -\cos t \sin t & \sin^2 t \\
\cos t \sin t & \cos^3 t -\sin^2 t & -\cos t \sin t - \cos^2t \sin t \\
\sin^2t & \cos t \sin t +\cos^2 t \sin t & \cos^2t - \cos t \sin^2t
\end{bmatrix}}
\end{align*}
which implies
\begin{align*}
\Omega_d(t) &= \begin{bmatrix}
1+\cos t \\ \sin t - \sin t \cos t \\ \cos t + \sin^2 t
\end{bmatrix}, \\
 \dot \Omega_d (t) &= \begin{bmatrix}
-\sin t\\ \cos t -\cos^2t + \sin^2 t\\
-\sin t + 2\sin t \cos t
\end{bmatrix}.
\end{align*}
Notice that
\[
R_d(0) =  \operatorname{diag}[1, 1, 1], \quad \Omega_d(0) = (2,0, 1).
\]
The initial state of the system is given by
\[
R(0) = R_d(0)\exp (\theta_0\hat e_2), \quad \Omega(0) = (2,0,1),
\]
where $\theta_0=0.999\pi$ and $e_2 = (0,1,0)$.

The controller parameters are chosen as
\begin{gather*}
k_R = 9,\quad k_\Omega=4.2,\quad a=\epsilon,\quad
\mu = \frac{4(1-a)k_R k_\Omega}{4(1-a) k_R + k_\Omega^2}\epsilon,\\
\theta_{b_0} = \min\{\theta_0\epsilon,\theta_0-\cos^{-1}(1-2a\epsilon)\},\\
\gamma=\frac{4}{\theta_{b_0}}\sqrt{a k_R (1-\epsilon)}\epsilon,
\end{gather*}
with a scaling parameter $\epsilon=0.9<1$ selected to satisfy the inequality constraints. These expressions allow that all of the controller parameters can be determined by tuning only the proportional gain $k_R$ and the derivative gain $k_\Omega$.


The corresponding simulation results are plotted in Figure \ref{fig:Sec3}. For the given initial conditions and the controller gains, $18=V_0(0) > 2ak_R=16.2$. Therefore, the initial condition does not belong to the region of attraction of AGTS estimated conservatively by (\ref{V:ini}) in spite of which the tracking errors for AGTS still converge to zero asymptotically.  The convergence rate is, however, quite low, and there is no noticeable change of the attitude tracking error for the first three seconds. The slow initial convergence is common for controllers with almost global asymptotic stability, especially when the initial state is close to the stable manifold of the undesired equilibrium point \cite{LeeLeoPICDC11}.

Next, for HYB, the convergence rate for the attitude tracking error is improved. But the tracking error for the angular velocity is increased over the first four seconds, and there are two abrupt  changes in the control input at $t=1.95$ and $t=3.31$. Compared with AGTS, the convergence rate is substantially improved at the expense of discontinuities in the control input.

Finally, the proposed GTS exhibits the fastest convergence rate for both of the attitude tracking errors and the angular velocity tracking errors. This is most desirable, as excellent tracking performances are achieved without discontinuities in control input for the large initial attitude error.

\begin{figure}
\centerline{
	\subfigure[Tracking errors and estimation error]{\includegraphics[width=0.8\columnwidth]{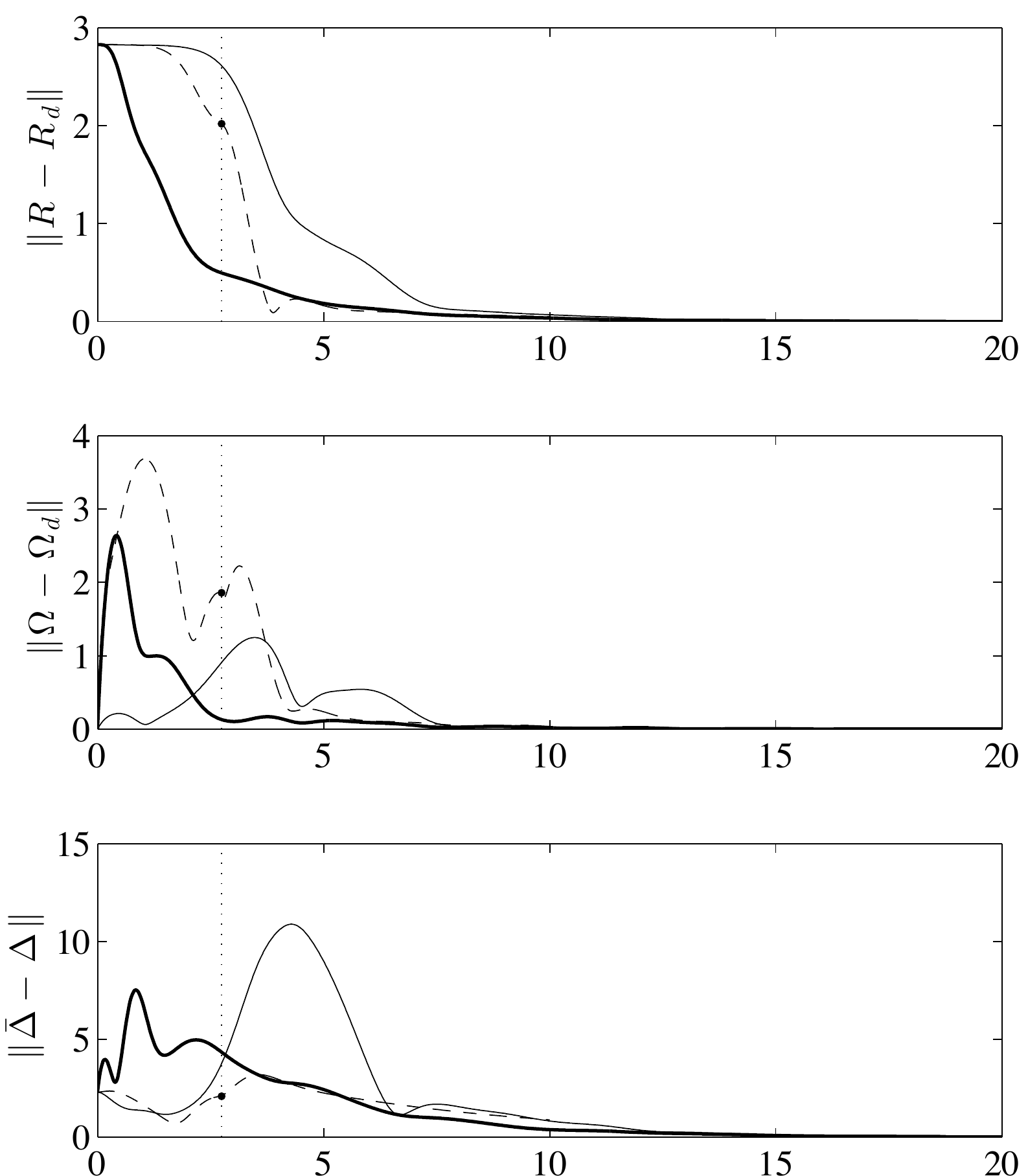}}
	}
\centerline{
	\subfigure[Control inputs]{\includegraphics[width=0.8\columnwidth]{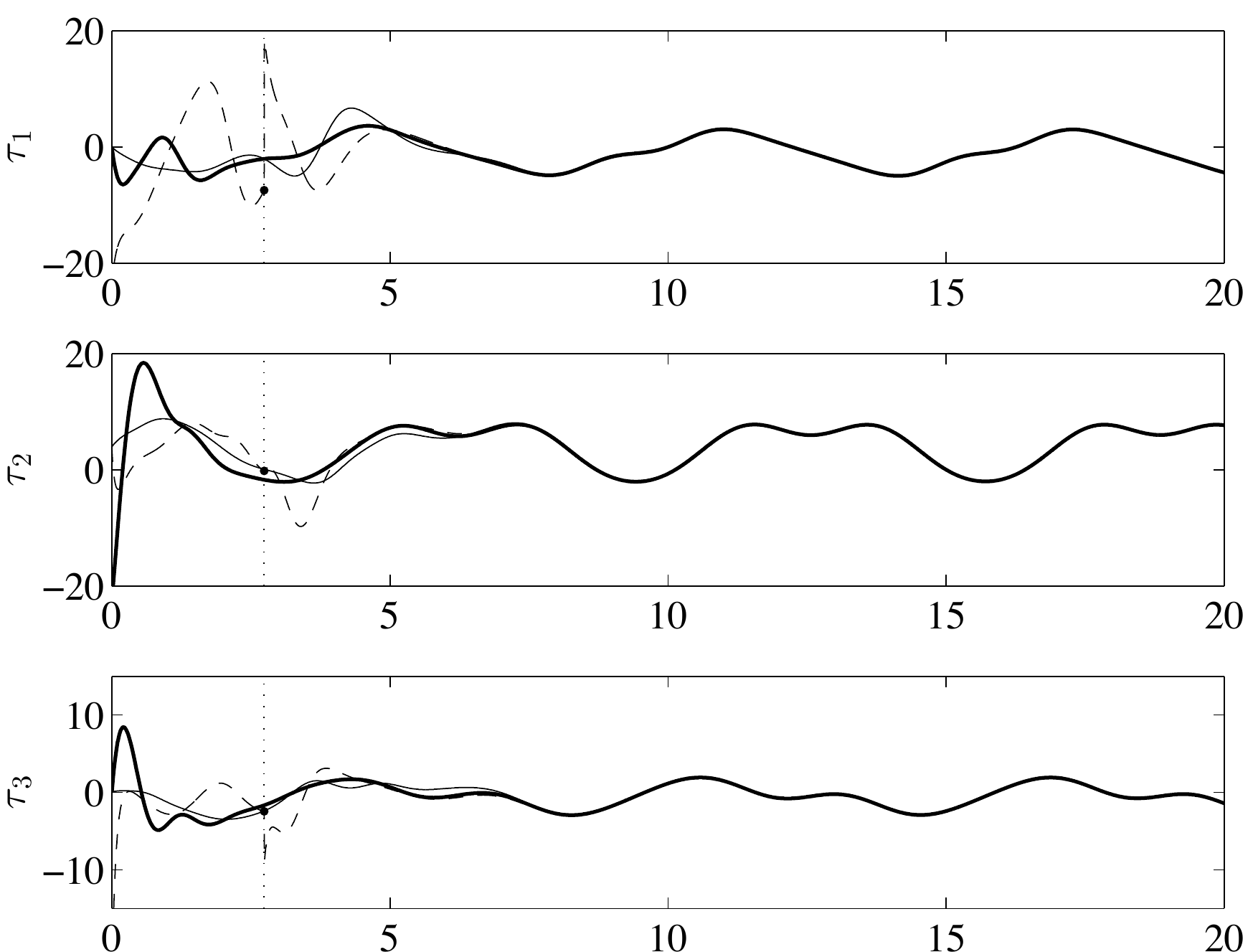}}
	}
\caption{Simulation results for two adaptive attitude tracking controls presented in Section \ref{sec:AATC} and an adaptive hybrid control~\cite{LeeITAC15} (aAGTS:solid, aGTS:bold, aHYB:dotted; the switching time for aHYB is denoted by vertical dotted lines).}\label{fig:Sec4}
\end{figure}

\subsection{Adaptive Attitude Controls}\label{sec:NEb}

Next, we present the simulation results for the adaptive attitude control strategies presented in Section \ref{sec:AATC}. Throughout this section, two control strategies of Section \ref{sec:AATC} are denoted by aAGTS (adaptive almost global tracking strategy), and aGTS (adaptive global tracking strategy), respectively. They are compared with the extension of the hybrid attitude control with an adaptive term in~\cite{LeeITAC15}, and it is denoted by aHYB.

The fixed disturbance is chosen as
\[
\Delta = (1,-2,0.5),
\]
with the estimated bound of $\delta = 3$. The values of the controller parameters $k_R$, $k_\Omega$, $a$, and $\mu$ are identical to those used in the previous subsection. The other parameters are chosen as
\begin{gather*}
\theta_{b_0} = \min\{\theta_0\epsilon,\theta_0-\cos^{-1}(1-B\epsilon/k_R)\},\\
\gamma=\frac{2}{\theta_{b_0}}\sqrt{2(1-\epsilon)B}\epsilon,\quad
k_\Delta=25,
\end{gather*}
with a scaling parameter $\epsilon=0.9<1$ such that the inequality constraints are satisfied. One can easily verify that the chosen value of $k_\Delta$ satisfies \refeqn{kdelta}. In short, all of the controller parameters can be selected by tuning $k_R$, $k_\Omega$, and $k_\Delta$ with consideration of \refeqn{kdelta} for $k_\Delta$.

Simulation results are plotted in Figure \ref{fig:Sec4}. All the three controllers successfully estimate the unknown disturbance, as the estimation error $\bar\Delta-\Delta$ asymptotically converges to zero, and the effects of the disturbance are mitigated. The overall performance characteristics for each method are similar to those in the previous subsection.

For aAGTS, the given initial condition does not belong to the region of attraction estimated by \refeqn{ineq:var:V0}, as $18=V_0(0)> B=10.31$. However, both the tracking errors and the estimation error for aAGTS asymptotically converge to zero, although the initial convergence rate is low.

With aHYB, the attitude tracking performance is substantially improved, but the initial angular velocity tracking error is increased. There exists a discontinuity in the control input at $t=2.74$, where the magnitude of  control moment change at the jump exceeds $26.8\,\mathrm{Nm}$.

The proposed aGTS exhibits the best tracking performance for the attitude and the angular velocity, and it does not cause any discontinuity of the control input.

\section{Experimental Results}

\begin{figure}
\centerline{
	\subfigure[Initial attitude]{\includegraphics[width=0.48\columnwidth]{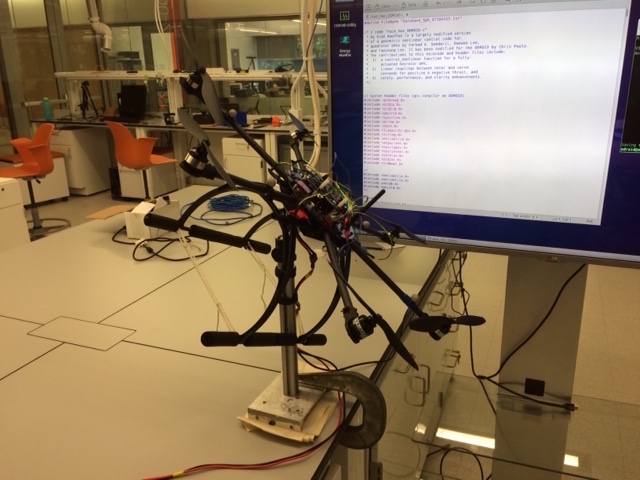}}
	\hfill
	\subfigure[Stabilizing the inverted equilibrium]{\includegraphics[width=0.48\columnwidth]{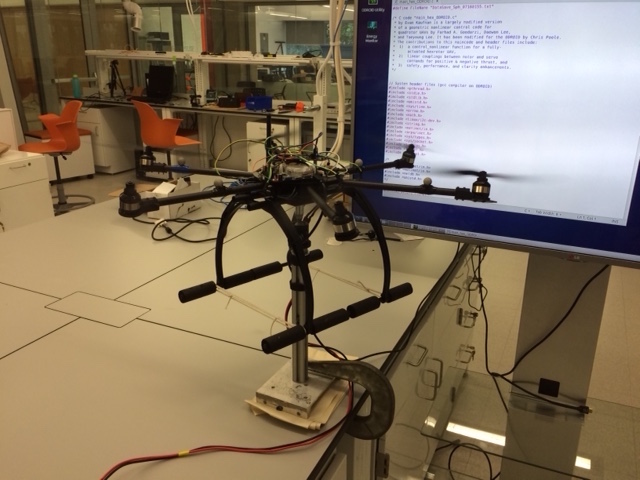}}
}
\caption{Hexrotor attitude control experiment}
\end{figure}

\begin{figure}
\centerline{
	\subfigure[Tracking errors and estimated disturbance]{\includegraphics[width=0.8\columnwidth]{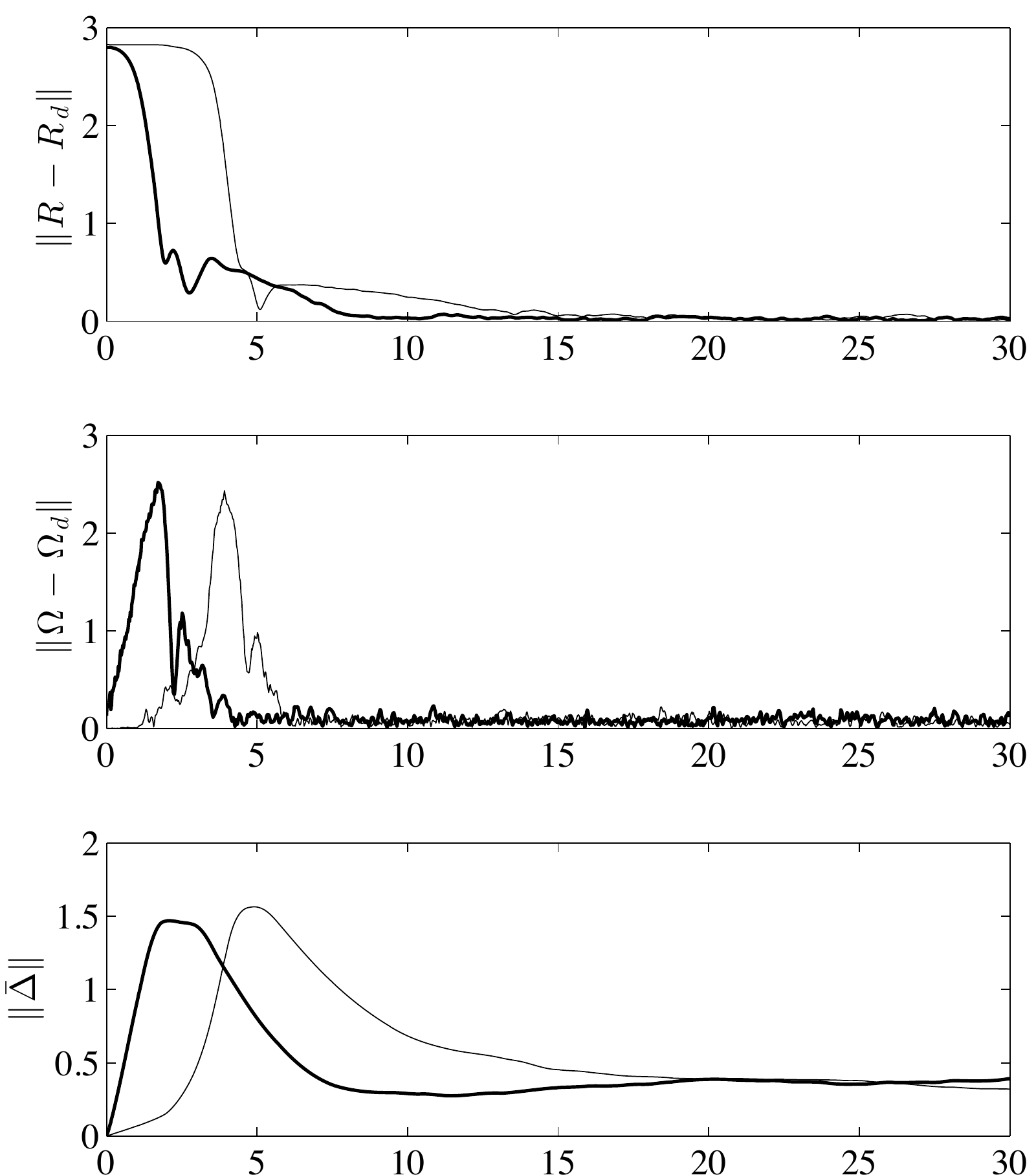}}
	}
\centerline{
	\subfigure[Control inputs]{\includegraphics[width=0.8\columnwidth]{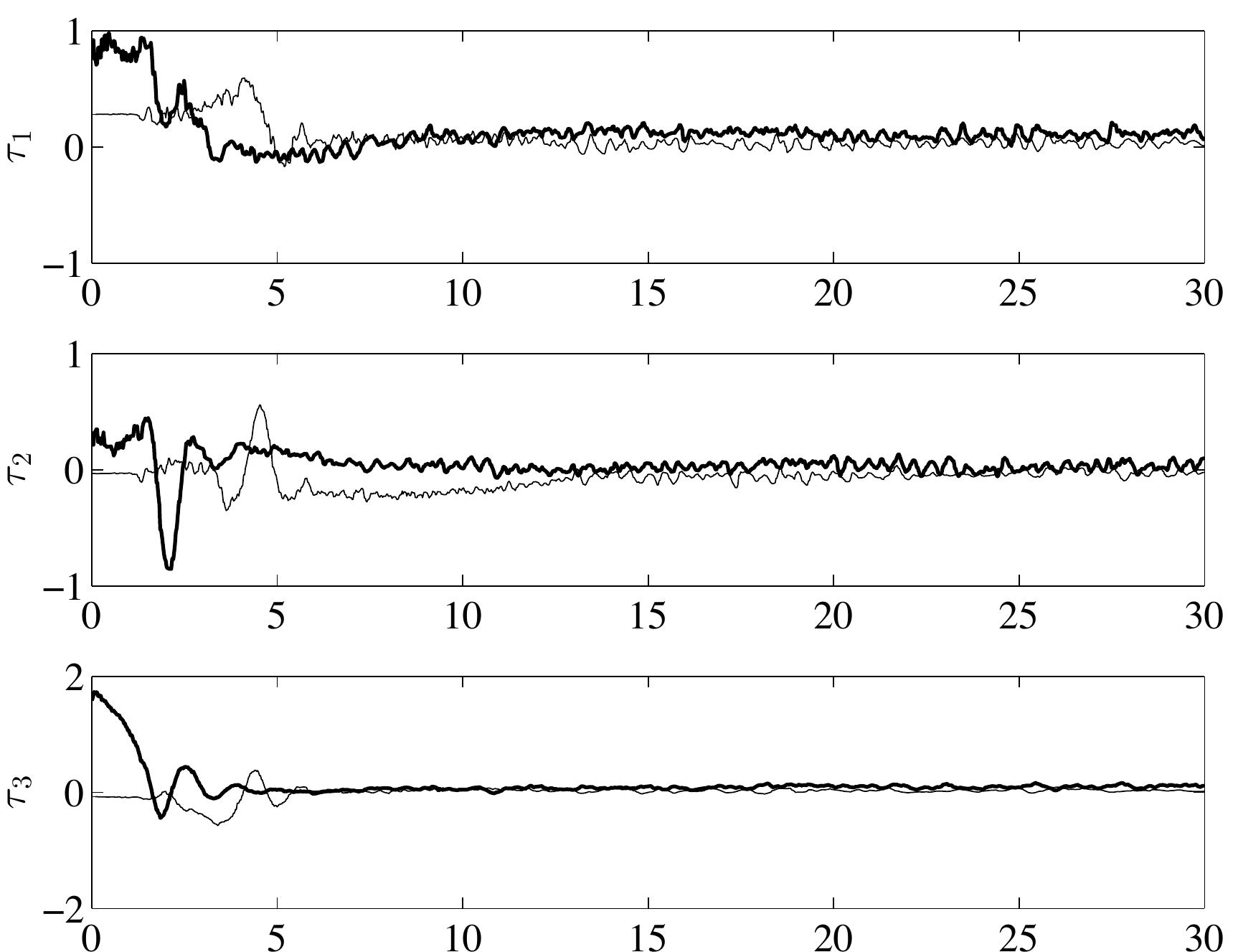}}
	}
\caption{Experimental results for two adaptive attitude tracking controls presented in Section \ref{sec:AATC} (aAGTS:solid, aGTS:bold)}\label{fig:Sec6}
\end{figure}

The two  adaptive attitude tracking control strategies presented in Section \ref{sec:AATC} have been implemented on the attitude dynamics of a hexrotor unmanned aerial vehicle, to illustrate the efficacy of the proposed approaches through hardware experiments.

\subsection{Hexrotor Development}

The hardware configuration of the hexrotor is as follows. Six brushless DC motors (Robbe Roxxy) are used along with electric speed controllers (Mikrokopter BL-Ctrl 2.0). An inertial measurement unit (VectorNav VN-100) provides the angular velocity and the attitude of the hexrotor. A linux-based computing module (Odroid XU-3) handles onboard data processing, sensor fusion, control input computation, and communication with a host computer (Macbook Pro). A custom made, printed circuit board supplies power to each part from a battery after switching the voltage level appropriately.

A flight software program is developed by utilizing multithread programming in gcc such that the tasks of communication, sensor fusion, and control are performed in a parallel fashion. In particular, the control input is computed at the rate of $120\,\mathrm{Hz}$ approximately.

The hexrotor is attached to a spherical joint that provides unlimited rotation in the yaw direction, and $\pm 45^\circ$ rotations along the pitch and the roll.  As the center of gravity is above the center of the spherical joint, it resembles an inverted rigid body pendulum~\cite{ChaLeeJNS09}.

\subsection{Experimental Results}

Two adaptive attitude tracking control strategies that provide smooth control inputs, namely aAGTS and aGTS are implemented. The desired attitude corresponds to the inverted equilibrium, where the center of gravity of the hexrotor is directly above the spherical joint, and the first body-fixed axis of the inertial measurement unit points towards the magnetic north, i.e., $R_d(t)=I_{3\times 3}$ for all $t\geq 0$. Note that the desired attitude is unstable due to the gravity.

The initial condition is chosen such that the pitch angles is decreased until the limit of the spherical joint, and the first body-fixed axis points towards the magnetic south. The resulting initial attitude error is close to $180^\circ$, i.e., $\|E_R(0)\|\simeq 2\sqrt{2}$. The initial angular velocity is chosen as zero. The controller parameters are selected as $k_R=1.45$, $k_\Omega=0.4$, $k_\Delta=0.2$, and $\delta=1$. Other parameters are identical to those presented in Section \ref{sec:NEb}.

The corresponding experimental results are illustrated in Figure \ref{fig:Sec6}. The overall behaviors of adaptive controllers are similar to the numerical simulation results. The unstable desired attitude is asymptotically stabilized by both adaptive attitude controllers.

For aAGTS, the initial convergence rate, particularly for the attitude tracking error, is quite slow. For example, the attitude tracking error remains close to its initial value for the first few seconds. However, aGTS exhibits a satisfactory convergence rate from the beginning, and it shows most desirable results.

\section{Conclusions}

We have proposed global tracking strategies for the attitude dynamics of a rigid body. The topological restriction on the special orthogonal group is circumvented by introducing a shifted reference trajectory with a conjugacy class. As a result, global attractivity is achieved without causing discontinuities in  control input with respect to time. These are constructed on the special orthogonal group to avoid singularities and ambiguities of other attitude representations. The desirable properties of the proposed methods are demonstrated by numerical examples and experimental results.

The proposed approaches are fundamentally distinctive from the current efforts to achieve global attractivity via modifying attitude configuration error functions along with the hybrid system framework. This paper shows that the desired trajectory can be adjusted instead, and global attractivity can be achieved without introducing undesired abrupt changes in control input.

For future work, the idea of shifting reference trajectories can be applied to feedback control on other compact manifolds and Lie groups. Also, the results presented in this paper are readily generalized to various other attitude control problems such as velocity-free attitude controls or deterministic attitude observers.

\section*{Acknowledgement}
This work was supported in part by NSF under the grants CMMI-1243000, CMMI-1335008, and CNS-1337722, and by KAIST under grant G04170001. It was also supported in part by DGIST Research and Development Program (CPS Global Center) funded by the Ministry of Science, ICT \& Future Planning, Global Research Laboratory Program (2013K1A1A2A02078326) through NRF, and Institute for Information \& Communications Technology Promotion (IITP) grant funded by the Korean government (MSIP) (No. 2014-0-00065, Resilient Cyber-Physical Systems Research).

\bibliography{BibMaster17,tylee}
\bibliographystyle{IEEEtran}

\end{document}